\documentclass[12pt]{article}
\usepackage{geometry}
\usepackage{graphicx,tikz,caption,subcaption}
\usepackage{fullpage}
\usepackage{hyperref}
\usepackage{enumerate}
\usepackage{hyperref}
\usepackage{amsmath}
\usepackage{amssymb}
\usepackage{amsfonts}
\usepackage{amsthm}
\usepackage[capitalise]{cleveref}
\usepackage{geometry}
\usepackage{float}
\usepackage{mathrsfs}
\newtheorem{innerproblem}{Problem}
\newenvironment{problem*}
  {
   \innerproblem}
  {\endinnerproblem}
\newtheorem{innerdefinition}{Definition}
\newenvironment{definition*}
  {
   \innerdefinition}
  {\endinnerdefinition}
\newtheorem{theorem}{Theorem}[section]
\newtheorem{lemma}{Lemma}[section]

\newtheorem{claim}{Claim}[section]

\newtheorem{conjecture}{Conjecture}[section]
\newtheorem{observation}{Observation}[section]
\newtheorem{observation*}{Observation}[section]

\newtheorem{problem}{Problem}[section]

\newtheorem{case}{Case}

\newtheorem{subcase}{Subcase}[case]

\newcommand{\sat}{\mathrm{sat}}

\newcommand{\SAT}{\mathrm{SAT}}

\usetikzlibrary{patterns}
\usetikzlibrary{shapes}
\usetikzlibrary{decorations.pathreplacing}
\usetikzlibrary{decorations.pathmorphing}
\usetikzlibrary{positioning}
\definecolor{mypink}{RGB}{255, 105, 160}
\definecolor{myorange}{RGB}{255, 178, 49}
\definecolor{darktangerine}{rgb}{1.0, 0.66, 0.07}
\definecolor{darkpastelgreen}{rgb}{0.01, 0.75, 0.24}
\title{Saturation numbers of some joins of graphs}
\author{Xinying Hua \,  Yuejian Peng\thanks{E-mail addresses:  xyhuamath@163.com (X. Hua), ypeng1@hnu.edu.cn (Y. Peng, corresponding author), supported in part by National Natural Science Foundation of China (No. 12571363) and   Natural Science Foundation of Hunan Province (Grant No. 2025JJ30003).}\\
{\footnotesize  School of Mathematics, Hunan University, Changsha, Hunan, 410082, P.R. China}\\
}

\begin{document}

\maketitle
\begin{abstract}
Let $H$ be a graph. A graph $G$ is $H$-saturated if $G$ is $H$-free, but adding any edge between two non-adjacent vertices of $G$ yields an $H$-copy as a subgraph. The saturation number $\sat(n, H)$ is the minimum number of edges in an $H$-saturated graph on $n$ vertices.
The saturation number for the join of a vertex and a graph $F$, denoted by $K_1\vee F$, has attracted considerable attention.
Cameron and Puleo [Discrete Math. 345 (2022), 112867] proved that $\sat(n,K_1 \vee F)\le n-1+\sat(n-1, F)$ for $n > |V(F)|$.
A natural question is when the above equality holds. Most existing results impose conditions on $F$ and assume that $F$ has no isolated vertices.
Let $K_p^-$ be the graph obtained by deleting one edge from the complete graph $K_p$.
In this paper, we investigate the saturation number of $K_1\vee F$ when $F$ contains isolated vertices, and determine the exact value of $\sat(n, K_1\vee F)$ when $F=K^-_{3}\cup sK_1(s\ge 1)$ or $F=K^-_{p-1}\cup K_1(p\ge 5)$. In our results,  $\sat(n,K_1 \vee F)= n-1+\sat(n-1, F)$ holds when $F=K^-_{3}\cup sK_1$ for any $s\ge 1$, but fails when $F=K^-_{p-1}\cup K_1$ for $p\ge 5$.
\end{abstract}
\section{Introduction}

All graphs considered in this paper are simple and undirected.  Let $G=(V(G),E(G))$ be a graph.
We use $|V(G)|$ to denote its order and $e(G) = |E(G)|$ to denote its number of edges. For a fixed graph $F$, a graph $G$ is called \emph{$F$-saturated} if it contains no subgraph isomorphic to $F$, while for every edge $e\notin E(G)$, adding $e$ to $G$ produces a copy of $F$ as a subgraph. The \emph{saturation number} of $F$, denoted by $\sat(n,F)$, is the minimum number of edges among all $F$-saturated graphs of order $n$. Let $$\SAT(n,\,F):= \{G : |V(G)|=n,  e(G) = \sat(n, F), \mbox{ and } G \mbox{ is } F\mbox{-saturated}\},$$ and each graph in $\SAT(n,\,F)$ is called an \emph{extremal graph} for $F$.
For any two vertex-disjoint graphs $G_1$ and $G_2$, their \emph{join graph} $G_1\vee G_2$ is defined as the graph formed from the disjoint union $G_1\cup G_2$ by adding all edges between $V(G_1)$ and $V(G_2)$.
We denote the path, cycle, star, complete graph, and empty graph on $n$ vertices by $P_n$, $C_n$, $S_n$, $K_n$, and $\overline{K_n}$, respectively. We also let $K_p^-$ denote the graph obtained by removing one edge from the complete graph $K_p$.

The study of the saturation number of a graph was initiated by Erd\H{o}s, Hajnal and Moon. In~\cite{erdos1964}, they proved that $\sat(n, K_{k+1})=(k-1)n-\binom{k}{2}$, with
$K_{k-1}\vee \overline{K_{n-k+1}}$ being the unique minimum-size $K_{k+1}$-saturated graph of order $n$.
K\'{a}szonyi and Tuza~\cite{Kaszonyi} gave a general upper bound on $\sat(n, H)$ for any graph $H$,
and determined $\sat(n, P_k)$, $\sat(n, K_{1,k})$, and $\sat(n, kP_2)$.
In subsequent work, the saturation number has been investigated for a wide variety of graph families.
Examples include small cycles~\cite{Chen2009,Chen2011,Lan2025,Ma2021,Ollmann1972,Tuza1989}, complete multipartite graphs \cite{Chen2014,He2021,Huang2024}, forests~\cite{Cao2023,Chen2015,Fan2015,Faudree2009,He2023,Kaszonyi,Lv2023}, books~\cite{GChen2009}, and disjoint unions of complete graphs~\cite{Chen2024,Faudree2009b,Zhu2025}. Nevertheless, results providing the exact saturation number for all $n$ are still quite limited.
See \cite{Currie2021} and \cite{Faudree2011} for comprehensive surveys.

The join operation is a fundamental and widely used construction in graph theory.
Existing work on the saturation number for join graphs has focused on determining $\sat(n, K_1 \vee F)$ from the saturation number of $F$.
Cameron and Puleo \cite{Ca} showed that $\sat(n,K_1 \vee F)\le n-1+\sat(n-1, F)$ for $n \ge |V(F)|+1$. It is an interesting problem to determine for which graphs the above inequality is tight.
In 2024, Hu, Luo and Peng \cite{HU} proved the following theorem.

\begin{theorem}\label{T12} \text{\bf \cite{HU}} Let $s$ and $n$ be positive integers, and let $F$ be a graph without isolated vertices. Then for $n \geq 3s^2 -s +2\,\sat(n-s, F)+1$, we have
\begin{equation*}
\begin{aligned}
\sat(n, K_s \vee F) = \binom{s}{2} + s(n-s) + \sat(n-s, F) \,.
\end{aligned}
\end{equation*}
\end{theorem}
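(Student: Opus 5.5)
The plan has two halves. The upper bound comes from the standard construction: take an $F$-saturated graph $G_0 \in \SAT(n-s,F)$ and join it with a clique $K_s$. The resulting graph $K_s\vee G_0$ has exactly $\binom{s}{2}+s(n-s)+\sat(n-s,F)$ edges. It contains no $K_s\vee F$. To see this, note that any copy of $K_s\vee F$ must place its $s$ cone vertices somewhere. Every vertex of $K_s\vee F$ has degree at least $s$, but more to the point, removing any $s$ vertices of $K_s\vee G_0$ leaves a subgraph of $G_0$ minus some vertices together with leftover clique vertices. A copy of $F$ would then have to lie in $G_0$ plus fewer than $s$ universal vertices; since $F$ has no isolated vertices, a counting argument shows this forces a copy of $F$ in $G_0$. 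More carefully, I would argue by induction using that the "apex count" of the copy inside $K_s$ is at most $s$. Adding a missing edge inside $G_0$ creates $F$ in $G_0$, and joining it with $K_s$ gives $K_s\vee F$. Alternatively, one can iterate Cameron–Puleo's bound $s$ times, writing $K_s\vee F=K_1\vee(K_{s-1}\vee F)$; this gives the upper bound directly.

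For the lower bound, let $G$ be $K_s\vee F$-saturated on $n$ vertices, and suppose $e(G)$ is below the target. Then $e(G)<sn+\sat(n-s,F)$, so $G$ has many vertices of degree less than $s$-ish. Saturation forces every non-edge $uv$ to lie in a copy of $K_s\vee F$ after addition, so both $u$ and $v$ have degree at least $s$ in $G+uv$. I would first show that there are exactly $s$ vertices (a set $S$) of huge degree, adjacent to almost everything. Let $L$ be the set of low-degree vertices, those of degree at most about $3s$. Since $\sum \deg < 2sn+2\sat(n-s,F)$, the number of vertices of degree larger than $2s$ is bounded, and the hypothesis $n\ge 3s^2-s+2\sat(n-s,F)+1$ guarantees that a vertex $v$ of degree exactly $s$ exists, or at least that many vertices of degree at most $2s$ exist. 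Take a low-degree vertex $v$ and a non-neighbor $u$. Adding $uv$ creates $K_s\vee F$ using $uv$, and this pins the structure of $N(v)$. Comparing over many low-degree vertices, a pigeonhole argument shows that some $s$-set $S$ is a clique dominating almost all vertices. Then, to show $S$ dominates everything, I would use edge counting: each vertex outside $S$ not fully joined to $S$ must compensate with extra edges.

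Once $S$ is a clique joined to all of $V\setminus S$, consider $G'=G-S$. It is $F$-free, since $S\vee F'$ would otherwise give $K_s\vee F$. For any non-edge $xy$ in $G'$, the new copy of $K_s\vee F$ in $G+xy$ must be analyzed. If it does not use $S$ exactly as the cone, swap vertices, using that $S$ is universal, to produce a copy of $F$ in $G'+xy$ containing $xy$. Here the absence of isolated vertices in $F$ matters, because it ensures that $xy$ lies in the $F$-part after rearrangement. Hence $G'$ is $F$-saturated and $e(G)\ge \binom s2+s(n-s)+\sat(n-s,F)$.

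The main obstacle is the structural step of forcing a universal $s$-clique $S$ from the edge deficit, that is, ruling out configurations where high-degree vertices are spread out. I would handle it by a weighted counting argument. Each vertex $w$ outside $S$ that misses a vertex of $S$ has degree at least $s$ plus its contribution to $F$-saturation. The bound $3s^2-s+2\sat$ is chosen precisely so that the total excess edges exceed the slack.
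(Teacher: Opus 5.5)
The paper quotes this theorem from Hu, Luo and Peng without proof, so your argument has to stand on its own. Your upper bound is fine. Iterating the Cameron--Puleo bound works. Your direct argument can also be made clean by a swap: any vertex of the host clique lying in the $F$-part of a putative copy of $K_s\vee F$ can be exchanged with a cone vertex outside the host clique (one exists by cardinality), and repeating this puts the $F$-part inside $G_0$.

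The lower bound, however, has a genuine gap. Producing the universal $s$-clique is the whole content of the theorem, and you only assert it (``a pigeonhole argument shows\ldots'', ``a weighted counting argument''). Several specifics are also off:
\begin{itemize}
\item Every vertex has degree at least $s$ (see below), so there are no vertices of degree ``less than $s$''.
\item The hypothesis by itself only gives $2\sat(n-s,F)\le n-3s^2+s-1$. Hence $2T\le (2s+1)n-4s^2-1$ for the target $T=\binom{s}{2}+s(n-s)+\sat(n-s,F)$, so you may assume $\delta(G)\le 2s$ (otherwise $e(G)\ge (2s+1)n/2>T$). That a vertex of degree exactly $s$ exists is essentially the conclusion of the missing count, not something the hypothesis hands you.
\item You give no mechanism for why neighbourhoods of different low-degree vertices should share a common $s$-set.
\item The no-isolated-vertex hypothesis is not what makes your last step work. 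There, $F$-freeness of $G-S$ already forces any copy of $F$ in $(G-S)+xy$ to use $xy$. The hypothesis is needed earlier, in a place you never use it.
\end{itemize}

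The missing idea is this. Since $F$ has no isolated vertices, every edge of $K_s\vee F$ lies in a $K_{s+2}$ of $K_s\vee F$. For a cone--$F$ edge $cy$, take the cone together with $y$ and an $F$-neighbour of $y$. Hence for every non-edge $uv$ of $G$, $G[N(u)\cap N(v)]$ contains a $K_s$; in particular $\delta(G)\ge s$.

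Now work at a single vertex $v$ of minimum degree $\delta$, with $D=N(v)$ and $R=V(G)\setminus N[v]$.
\begin{itemize}
\item If $\delta=s$, then $D$ is a clique contained in $N(u)$ for every $u\in R$. So $D$ itself is a universal $s$-clique, and your final step applies.
\item If $s+1\le \delta\le 2s$, every $u\in R$ has at least $s$ neighbours in $D$, and if exactly $s$, then at least one neighbour in $R$. So $e(D,R)+e(R)\ge (s+\tfrac12)|R|$, and
\[
e(G)\ge \delta+\binom{s}{2}+\left(s+\tfrac12\right)(n-1-\delta)\ge \left(s+\tfrac12\right)n-\tfrac{3s^2+s+1}{2}\ge \left(s+\tfrac12\right)n-\tfrac{4s^2+1}{2}\ge T.
\]
The last inequality is exactly the hypothesis $n\ge 3s^2-s+2\sat(n-s,F)+1$.
\end{itemize}
This is the ``excess beats the slack'' count you allude to. It has to be carried out at one minimum-degree vertex, and without it the lower bound is unproved. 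It also shows why the hypothesis is so restrictive: it forces $\sat(n-s,F)<n/2$, which, as the paper remarks, requires $F$ to contain an isolated edge.
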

In~\cite{Qiu}, Qiu, He, Lu and Xu noted that the condition on $F$ in Theorem~\ref{T12} implies that $F$ must contain an isolated edge. Subsequently,
Hu, Ji and Cui~\cite{HU1} proved that $\sat(n,K_1 \vee P_t)=n-1+\sat(n-1,P_t)$ holds for $t \geq 5$ and sufficiently large $n$.
Song, Hu, Ji and Cui \cite{Song} proved that $\sat(n,K_1 \vee C_4)=\lfloor\frac{5n-10}{2}\rfloor$ holds for all $n \geq 6$.
In~\cite{Qiu}, Qiu, He, Lu and Xu further showed that $\sat(n,K_1 \vee C_t)=n-1+\sat(n-1, C_t)$ holds if $t \geq 8$ and $n \geq 56t^3$.

Among these results, almost all graphs $F$ under consideration are connected graphs or disconnected graphs without isolated vertices. Motivated by this, Hua and Peng~\cite{Hua2} considered the case in which $F$ consists of complete graphs and some isolated vertices and obtained the exact value of $\sat(n, K_1\vee (K_{p-1}\cup K_1))$ for all $p\ge 4$. Moreover, when $p=3$, the authors extended the result to any number of isolated vertices and determined the saturation number for $K_1\vee F$ when $F=K_{2}\cup qK_1$, or $F=2K_{2}\cup qK_1$ for any $q\ge 1$. In addition, all extremal graphs in \cite{Hua2} are fully characterized.

In this paper, we investigate the saturation number of $K_1 \vee F$ when $F$ is the vertex-disjoint union of the nearly complete graph $K_p^-$ and some isolated vertices.
Let $$K_{p^-}^{+s}:=K_1\vee(K^-_{p-1}\cup sK_1).$$
Our results are as follows.
\begin{theorem}\label{thm:K4*}
For $n\ge  s+4$, $\sat(n, K_{4^-}^{+s})=\lceil\frac{3n-4}{2}\rceil$.
\end{theorem}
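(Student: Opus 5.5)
The identity $K_{4^-}^{+s}=K_1\vee(P_3\cup sK_1)$ shows that $H:=K_{4^-}^{+s}$ is the diamond $K_4^-$ with $s$ pendant vertices attached at one of its two degree-$3$ vertices, which I call the \emph{center}. The plan is to prove matching upper and lower bounds; the lower bound is where the work lies.

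\emph{Upper bound.} I would use the bound of Cameron and Puleo, $\sat(n,K_1\vee F)\le n-1+\sat(n-1,F)$, with $F=P_3\cup sK_1$. A perfect matching on $n-1$ vertices, plus one isolated vertex if $n-1$ is odd, is $F$-saturated. It is $P_3$-free, and adding any edge creates a $P_3$. Since $n-1\ge s+3$, there are always $s$ further vertices available for the isolated part. Hence $\sat(n-1,F)\le\lfloor (n-1)/2\rfloor$, and therefore
\[
\sat(n,H)\le n-1+\left\lfloor\frac{n-1}{2}\right\rfloor=\left\lceil\frac{3n-4}{2}\right\rceil .
\]
Concretely, the extremal graph is a vertex joined to such a matching. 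It is also easy to check directly that it is $H$-free: its triangles pairwise share at most the apex vertex, so no two triangles share an edge and it contains no diamond.

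\emph{Lower bound.} Let $G$ be $H$-saturated on $n\ge s+4$ vertices. It suffices to show $\sum_v d(v)\ge 3n-4$, and I would do this by discharging. Give each vertex charge $d(v)-3$; the goal is to show that the total charge is at least $-4$. The basic tool is the local structure forced by adding a non-edge $xy$. The new edge lies in a copy of $H$, and in $G+xy$ the vertex $x$ has degree $d(x)+1$. So if $d(x)\le 1$, then $x$ plays the role of a pendant vertex or of a degree-$2$ diamond vertex in that copy. I would use this to classify low-degree vertices.
\begin{itemize}
\item[(i)] If $x$ is isolated, then every other vertex $y$ must be a center of a diamond in $G$ together with $s-1$ further neighbours. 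This forces many edges; for instance, at least $\tfrac{3}{2}$ per vertex once diamonds are counted. Alternatively, it yields a contradiction with $H$-freeness when $s=1$.
\item[(ii)] If $d(x)=1$ with neighbour $w$, then for each non-neighbour $y$, either $y$ is a diamond center with $s-1$ other neighbours, or $xyw$ closes a triangle that becomes part of a diamond through the edge $yw$. In either case, the vertices near $x$ have large degree.
\item[(iii)] If $d(x)=2$, then similarly its neighbours lie in triangles or have high degree.
\end{itemize}
The discharging rule sends charge from vertices of degree at least $4$ (in particular from diamond centers, or from the vertex $w$ adjacent to many degree-$1$ vertices) to their low-degree neighbours. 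Each degree-$1$ vertex needs $2$ units and each degree-$2$ vertex needs $1$ unit.

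\emph{Main obstacle.} The hardest step is making the discharging balance. A vertex $w$ adjacent to $k$ leaves has charge $d(w)-3$, which is roughly $k-3$, but the leaves request $2k$ in total. So I must show that leaves attached to a common vertex force additional structure. The candidate mechanism is this: two leaves $x,x'$ at $w$ are non-adjacent, so adding $xx'$ creates a triangle $xx'w$, and this triangle must extend to a diamond. That requires a diamond at $w$ using $w$'s other neighbours, which are then forced to have degree at least $2$ and to be adjacent to each other. I would try to prove that the neighbourhood of $w$ minus the leaves spans a perfect-matching-like structure, exactly as in the extremal graph. This would give the local count $e\ge \tfrac{3}{2}|{\rm block}|-2$, and summing over blocks yields $3n-4$ after absorbing the single global deficit. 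I would handle the components of $G$ separately and use $n\ge s+4$ to exclude small exceptional components.
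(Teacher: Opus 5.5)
\textbf{Upper bound.} Your upper bound is fine. It is the same graph the paper uses, and deriving it from the Cameron--Puleo inequality with a near-perfect matching is valid.

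\textbf{Lower bound: the missing pairwise constraint.} The lower bound is not a proof. You never specify the discharging rules, and the structural fact that makes the count balance is missing. That fact is global, not local. Suppose $x,y$ both have degree at most $2$ and $d(x,y)\ge 3$. Then $xy$ lies in no triangle of $G+xy$, so it must be the pendant edge of the new copy of $H$. The center of that copy has degree $s+3\ge 4$ in $H$, so one endpoint has degree at least $s+2\ge 3$ in $G$, a contradiction. Hence all vertices of degree at most $2$ are pairwise at distance at most $2$.

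Likewise, two leaves never coexist unless they form a $K_2$ component. After adding the edge between them, both endpoints have degree $2$, whereas every edge of $H$ has an endpoint of degree at least $3$.

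With these two facts the count is short:
\begin{enumerate}
\item No leaf and at most four degree-$2$ vertices, or one leaf and at most two degree-$2$ vertices, give $\sum d\ge 3n-4$ immediately.
\item With one leaf, any further degree-$2$ vertices are all adjacent to the leaf's neighbour.
\item With no leaf and at least five degree-$2$ vertices, the distance condition forces either a common neighbour of all of them, or all their neighbours to lie among two or three hub vertices. The large degrees of those hubs absorb the deficit.
\end{enumerate}
Your items (ii) and (iii) record what happens around a single low-degree vertex but never draw this pairwise consequence. So your rule that vertices of degree at least $4$ pay their low-degree neighbours has no guarantee: nothing you prove rules out many degree-$2$ vertices whose neighbours all have degree $3$.

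\textbf{The main obstacle you identify cannot occur.} For two leaves $x,x'$ at $w$, the triangle $xx'w$ does not extend to a diamond. A diamond edge must have an endpoint of diamond-degree $3$, but both $x$ and $x'$ have degree $2$ in $G+xx'$. So this configuration is simply a contradiction, and there is at most one leaf. In the extremal graph, the apex's many neighbours have degree $2$, not $1$.

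Moreover, summing a local bound $\frac32|\text{block}|-2$ over $r$ blocks gives total deficit $2r$, not $2$. Nothing in your plan ensures that only one block is deficient.

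\textbf{Disconnected case.} The same issue appears here. Small components cannot be ``excluded.'' For $s=1$, $K_1\cup tK_4$ is $H$-saturated and contains an isolated vertex; this also refutes your remark in (i) that an isolated vertex contradicts $H$-freeness when $s=1$. What you need instead is twofold:
\begin{enumerate}
\item A vertex of degree at most $2$ in a component $G_1$ forces every vertex outside $G_1$ to be a center, hence of degree at least $s+2\ge 3$. So only one component carries a deficit.
\item A component of order $n_1\le s+3$ must be complete, contributing $n_1(n_1-1)\ge 3n_1-4$ to the degree sum.
\end{enumerate}
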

For $s=1$, we obtain the saturation number of $K_{p^-}^{+1}$ for every $p\ge 5$.
\begin{theorem}\label{thm:Kp*}
Let $p\ge 5$, $n\ge p+1$ and $n\equiv k\pmod{p}$, where $k\in [0, p-1]$. Then $\sat(n, K_{p^-}^{+1})=\frac{(p-1)n}{2}+\frac{k(k-p)}{2}$.
\end{theorem}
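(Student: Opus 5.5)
The upper bound comes from a union of cliques, and the lower bound from a degree-deficit count. Write $H=K_{p^-}^{+1}$. Then $H$ is $K_p^-$, in which the center $v$ is adjacent to all other vertices, together with a pendant vertex $w$ attached to $v$. So $|V(H)|=p+1$, and the degrees in $H$ are $p$ for $v$, $p-1$ or $p-2$ on the $K_{p-1}^-$ part, and $1$ for $w$.

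\emph{Upper bound.} Write $n=qp+k$ and take $G=qK_p\cup K_k$. Each component has at most $p$ vertices, so $G$ is $H$-free. A nonedge $xy$ joins two different components, and one of them, say the one containing $x$, is a $K_p$. Then $x$ together with $K_p$ minus any edge not at $x$ forms a $K_p^-$ centered at $x$, and $y$ serves as the pendant $w$. So $G$ is $H$-saturated, and
\[
e(G)=q\binom p2+\binom k2=\frac{(p-1)(n-k)}2+\frac{k(k-1)}2=\frac{(p-1)n}2+\frac{k(k-p)}2 .
\]

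\emph{Lower bound: local facts.} Let $G$ be $H$-saturated on $n$ vertices and define the deficit $D=\sum_x \max(0,\,p-1-d(x))$. It suffices to show that $D\le k(p-k)/2$, or that any larger deficit is compensated by surplus degree elsewhere. I would first record the following local facts.

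(i) If $S$ spans a $K_p^-$ in $G$, then each of the $p-2$ vertices of $S$ that are adjacent to all of $S$ has no neighbor outside $S$, since otherwise that neighbor would play the role of $w$. Hence these vertices have degree exactly $p-1$.

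(ii) For a nonedge $xy$, the new edge $xy$ lies in the created copy of $H$. Either it is the pendant edge, in which case one endpoint already has degree $\ge p-1$ and is the center of a $K_p^-$ in $G$. Or it lies inside the $K_p^-$, in which case both endpoints have degree $\ge p-3$ in $G$.

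(iii) Consequently, vertices of degree $\le p-4$ are pairwise adjacent. More generally, every nonneighbor of a deficient vertex is either the center of a $K_p^-$ in $G$, or is nearly saturated and lies with it in an almost-$K_p^-$.

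\emph{Lower bound: global structure.} Let $B$ be the set of deficient vertices, those with $d(x)\le p-2$. I would aim to show that $B$ lies essentially in a single clique-like block $T$ with $|T|=t\le p-1$, and that every vertex of degree at least $p-1$ either lies in a $K_p^-$-centered structure or has surplus degree. Using (i), the centers of the $K_p^-$ copies that witness the nonedges incident to $B$ have degree exactly $p-1$ and closed neighborhoods that are $K_p^-$ or $K_p$. I would then peel these blocks off: after they are removed, the remaining graph should be a union of blocks of size $p$ plus the block $T$, which forces $t\equiv n\pmod p$, that is, $t=k$ or $t\ge p$ (and the latter is excluded). Inside $T$, each vertex has degree at least $t-1$, so
\[
D\le \frac{t(p-t)}{2}=\frac{k(p-k)}{2}.
\]
If instead $t\not\equiv n\pmod p$, some vertex must have degree above $p-1$ or some block must be imperfect, and I would check that the surplus this produces is at least the excess deficit.

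\emph{Main obstacle.} The hard step is this global one: turning the local information in (ii) into an exact accounting for vertices of degree $p-3$ and $p-2$. Such vertices can be saturated through the "inside $K_p^-$" case instead of the pendant case, so they need not form a clique, and configurations built from several $K_p^-$ blocks glued along their two non-adjacent vertices must be handled. I would first try a discharging argument. Each $K_p^-$ center of degree exactly $p-1$ is charged for the nonedges it witnesses. Each deficient vertex then receives charge from the vertices of high degree that certify its nonedges. The hypothesis $p\ge5$ should be used to separate degree $p-4$ from degree $p-2$ and to rule out the small exceptional configurations, which is consistent with Theorem~\ref{thm:K4*} having a different answer when $p=4$.
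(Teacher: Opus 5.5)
Your upper bound is correct and is the paper's construction. Your local facts (i)--(iii) are also right; they are essentially Lemmas~\ref{lem:N has K_p} and~\ref{lem:type2}. But the lower bound, which is the whole content of the theorem, is not proved. The ``global structure'' step is only a plan, and the structure it aims for is false for connected saturated graphs.

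For $m\ge 2$, consider $K_{p-3}\vee mK_2$, with the $K_{p-3}$ vertices as hubs. It is $K_p^-$-free: any $p$ vertices include at least three matching vertices and therefore miss at least two edges. Adding an edge $xy$ between matching edges $xx'$ and $yy'$ creates the $K_p^-$ on $V(K_{p-3})\cup\{x,y,x'\}$, centred at a hub, with $y'$ as pendant. So the graph is $K_{p^-}^{+1}$-saturated. Yet all $2m=n-p+3$ matching vertices have degree $p-2$ and are pairwise non-adjacent apart from the matching. Their deficit is paid for by the $p-3$ hubs of degree $n-1$. Hence no block $T$ with $|T|\le p-1$ carries the deficit, and your fallback (``the surplus compensates'') is exactly the part that needs proof.

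Similarly, distinct $K_p^-$ copies may share vertices of their non-adjacent pairs; only the full vertices are private (Lemma~\ref{lem:x in two K}). So peeling does not remove disjoint $p$-sets, and nothing forces $t\equiv n\pmod p$.

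There is also a factor-$2$ slip. With $S=\sum_x\max(0,d(x)-p+1)$ one has $2e(G)=(p-1)n-D+S$, so the correct target is $D-S\le k(p-k)$. A clique $T$ of size $t$ gives $D\le t(p-t)$, not $t(p-t)/2$. Your stated target $D\le k(p-k)/2$ already fails for the extremal graph $qK_p\cup K_k$ with $k\ge1$, where $D=k(p-k)$.

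Two ingredients are missing.

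\emph{The disconnected case.} You need to show that all but at most one component is a $K_p$ (Lemma~\ref{kpsubgraph}). The reason is that a $K_p$-free component always contains a vertex $u$ with no $K_p^-$ in $G[N[u]]$. Joining two such vertices in different components creates no copy of $K_{p^-}^{+1}$, contradicting saturation. This structural fact, not a deficit count, is where the residue $k$ comes from.

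\emph{The connected case.} You need $e(G)\ge h(n,p)$ for every connected saturated $G$ on $n\ge p+1$ vertices (Lemma~\ref{lem:connected}). The paper fixes a minimum-degree vertex $v$ and splits its non-neighbours into two classes.
\begin{itemize}
\item $A$: the added edge is pendant. This forces $N[u]\cong K_p^-$ and $d(u)=p-1$, and $N(u)\cap\overline{A}$ is exactly the non-adjacent pair of that $K_p^-$.
\item $B$: the added edge lies inside the $K_p^-$. Then $u$ has at least $p-3$ neighbours spanning a $K_{p-3}$ in $N(v)$. Once $G[N(v)]\cong K_{p-2}$ is excluded, $u$ also has a neighbour in $B$ or a further neighbour in $N(v)$.
\end{itemize}
When $B=\emptyset$ this gives roughly your picture: $n=(p-2)m+s+1$, where $m$ counts the $K_p^-$ copies, and a direct count gives $e(G)\ge h(n,p)$. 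When $B\neq\emptyset$ it gives $e(G)\ge e(G[N[v]])+\frac{p+1}{2}a+\frac{2p-5}{2}b$. Here $p\ge5$ enters through $\frac{2p-5}{2}>\frac{p-1}{2}$ (the same per-vertex cost as the matching vertices above) and through $n\ge p+\frac{5+k^2-pk}{p-4}$. The residual case, $d(v)=p-2$ with $G[N[v]]$ sparse, needs a further analysis of the non-edges at $v_{p-2}$.

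Your fact (ii) is the seed of the $A/B$ split, but without these structural counts the proposal as it stands establishes only the upper bound.
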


The rest of this paper is organized as follows. In Section 2, we introduce some necessary definitions and notations. In Section 3, we prove Theorem~\ref{thm:K4*}. In Section 4, we prove Theorem~\ref{thm:Kp*}.
We conclude with remarks in the final section.

\section{Definitions and Notation}

Given two disjoint sets $A, B \subseteq V(G)$, let $e(A, B)$ be the number of edges in $G$ with one endpoint in $A$ and the other in $B$. Given two vertices $u,v\in V(G)$, the \emph{distance} between $u$ and $v$, denoted by $d(u,v)$, is the length of a shortest $(u,v)$-path in $G$. We write $[n]=\{1,2,\cdots,n\}$.

For each vertex $v \in V(G)$, let $N_G(v)$ be the \emph{open neighborhood} of $v$,
and $N_G[v] = N_G(v) \cup\{v\}$  be its \emph{closed neighborhood}. The \emph{degree} of $v$ is given by $d_G(v) := |N_G(v)|$, and the minimum degree of $G$ is defined as $\delta(G) := \min\limits_{v\in V(G)} d_G(v)$.
For a vertex subset $D \subseteq V(G)$, we use the notation $N_D(v) = N_G(v) \cap D$, $N_G(D) = \bigcup_{v\in D} N_G(v)$, and $N_G[D] = D \cup N_G(D)$.
For $U\subseteq V(G)$, let $G-U$ be the graph obtained by deleting the vertex set $U$ and all edges incident to $U$. Also, we let $G[U]$ be the induced subgraph of $G$ on $U$.

 A \emph{cut vertex} of a graph is a vertex whose deletion increases the number of components.
A vertex of degree one is said to be a \emph{leaf}, and the edge incident to a leaf is called a \emph{pendant edge}.

\section{Saturation number for $K_{4^-}^{+s}$}
In this section, we consider the saturation number of $K_{4^-}^{+s}$.
Recall the statement of Theorem~\ref{thm:K4*}.

\noindent\textbf{Theorem~\ref{thm:K4*}.}
For $n\ge  s+4$, $\sat(n, K_{4^-}^{+s})=\lceil\frac{3n-4}{2}\rceil$.

\begin{proof}[\textbf{Proof of Theorem~\ref{thm:K4*}}]

If $n$ is odd, $K_1\vee\frac{n-1}{2}K_2$ is a $K_{4^-}^{+s}$-saturated graph with $\lceil\frac{3n-4}{2}\rceil$ edges.
If $n$ is even, then $K_1\vee(K_1\cup \frac{n-2}{2}K_2)$ is a $K_{4^-}^{+s}$-saturated graph with $\lceil\frac{3n-4}{2}\rceil$ edges.
Thus, $\sat(n, K_{4^-}^{+s})\le \lceil\frac{3n-4}{2}\rceil$.
Now, we prove the lower bound.

Let $G$ be a minimum $K_{4^-}^{+s}$-saturated graph of order $n$.

\setcounter{claim}{0}

\begin{claim} \label{clm:degreesum}
If $\sum\limits_{u\in V(G)}d_G(u)\ge 3n-4$, $e(G)\ge \lceil\frac{3n-4}{2}\rceil$.
\end{claim}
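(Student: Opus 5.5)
This claim follows directly from the handshake lemma together with the integrality of $e(G)$, so the plan is short. First I will invoke the degree-sum formula $\sum_{u\in V(G)} d_G(u) = 2e(G)$, which holds for every finite simple graph. Combining it with the hypothesis $\sum_{u\in V(G)} d_G(u)\ge 3n-4$ gives $2e(G)\ge 3n-4$, that is, $e(G)\ge \frac{3n-4}{2}$.

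Next I will use the fact that $e(G)$ is an integer. Any integer at least a real number $x$ is at least $\lceil x\rceil$, so
\[
e(G)\ge \left\lceil \frac{3n-4}{2}\right\rceil .
\]
If one wishes to be explicit about parity: when $n$ is even, $3n-4$ is even and the bound is exactly $\frac{3n-4}{2}$; when $n$ is odd, $3n-4$ is odd, and $2e(G)$ being even forces $2e(G)\ge 3n-3$, giving $e(G)\ge \frac{3n-3}{2}=\lceil\frac{3n-4}{2}\rceil$.

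There is no real obstacle here. The claim only reduces the lower bound in Theorem~\ref{thm:K4*} to establishing the degree-sum inequality $\sum_{u} d_G(u)\ge 3n-4$ for the minimum $K_{4^-}^{+s}$-saturated graph $G$. The substantive work, which is presumably a charging or discharging argument on low-degree vertices, lies in the subsequent claims and not in this one.
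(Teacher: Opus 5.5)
Your proof is correct and is essentially the paper's argument: both combine $\sum_{u} d_G(u)=2e(G)$ with integrality. The paper phrases this as a split into $\sum_u d_G(u)=3n-4$ (so $3n-4$ is even) and $\sum_u d_G(u)\ge 3n-3$, which is the same parity reasoning you spell out.
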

\begin{proof}
Since $2e(G)=\sum\limits_{u\in V(G)}d_G(u)$, if $\sum\limits_{u\in V(G)}d_G(u)= 3n-4$, then $3n-4$ is even and $e(G)=\frac{3n-4}{2}=\lceil\frac{3n-4}{2}\rceil$. If $\sum\limits_{u\in V(G)}d_G(u)\ge 3n-3$, then $e(G)\ge \lceil\frac{3n-4}{2}\rceil$. This completes the proof.
\end{proof}

\setcounter{case}{0}
	We divide the proof into the following two cases.
\begin{case}
$G$ is connected.
\end{case}

Let $A_i=\{x\in V(G)|d_G(x)=i\}$ and $a_i=|A_i|$. If $A_1\cup A_2=\emptyset$,
then $e(G)\ge\frac{3n}{2}>\lceil\frac{3n-4}{2}\rceil$. So we assume that $A_1\cup A_2\neq\emptyset$.

\begin{claim}\label{clm:distance}
If $x$ and $y$ are two vertices in $A_1\cup A_2$, then $d(x,y)\le 2$.
\end{claim}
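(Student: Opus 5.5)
The plan is to argue by contradiction, using only the saturation property of $G$ and the shape of $K_{4^-}^{+s}$; connectivity is not needed for this claim.

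First I will record the structure of $H:=K_{4^-}^{+s}=K_1\vee(K_3^-\cup sK_1)$. Since $K_3^-$ is the path $a b c$, the graph $H$ consists of a center $w$ joined to $a,b,c$, together with the edges $ab$, $bc$ and $s$ pendant vertices attached to $w$. Its edge set is therefore $\{wa,wb,wc,ab,bc\}$ plus the $s$ pendant edges at $w$. The vertex $w$ has degree $s+3\ge 4$ in $H$ because $s\ge 1$, and every edge of $H$ not incident to $w$ is $ab$ or $bc$. Both of these edges have their two endpoints commonly adjacent to $w$.

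Now let $x,y\in A_1\cup A_2$ and suppose $d(x,y)\ge 3$. In particular $x\ne y$ and $xy\notin E(G)$. By saturation, $G+xy$ contains a copy of $H$ that uses the edge $xy$. In $G+xy$ both $x$ and $y$ have degree at most $3$, since $d_G(x),d_G(y)\le 2$.

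Next I will determine which edge of $H$ can be played by $xy$. If $xy$ corresponded to an edge of $H$ incident to $w$, then $x$ or $y$ would play the role of $w$. That vertex would then need degree at least $s+3\ge 4$ in $G+xy$, which contradicts the bound of $3$. Hence $xy$ plays $ab$ or $bc$, and $x,y$ are the images of two vertices that are both adjacent to $w$ in $H$. The edges from these two vertices to $w$ are distinct from $xy$, so they lie in $G$. Thus the image of $w$ is a common neighbour of $x$ and $y$ in $G$, which gives $d(x,y)\le 2$, a contradiction.

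The only point that needs care is excluding $w$ as an endpoint of $xy$. This is exactly where the hypothesis $s\ge 1$ enters: it makes $d_H(w)=s+3$ strictly exceed the maximum possible degree $3$ of a vertex of $A_1\cup A_2$ after the edge is added. Everything else is immediate.
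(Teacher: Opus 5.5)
Your proof is correct and uses the same two facts as the paper: every edge of $K_{4^-}^{+s}$ not incident to the center lies in a triangle through the center, and the center has degree $s+3\ge 4$, which is more than the degree (at most $3$) of any vertex of $A_1\cup A_2$ in $G+xy$. The paper applies them in the reverse order: $d(x,y)\ge 3$ rules out triangle edges, so $xy$ is pendant and one endpoint is the center, which gives the degree contradiction. The difference is only cosmetic.
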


\begin{proof}
If $d(x,y)\ge 3$, then $G+xy$ contains a $K_{4^-}^{+s}$-copy containing $xy$, and $xy$ must be the pendant edge of this copy, say $H$. Then $d_H(x)=s+3$ or $d_H(y)=s+3$.
Furthermore, $d_G(x)=s+2$ or $d_G(y)=s+2$.
As $s\ge 1$, $d_{G}(x)\ge  3$ or $d_{G}(y)\ge  3$, a contradiction.
\end{proof}

\begin{claim} \label{clm:oneleaf}
$a_1\le 1$.
\end{claim}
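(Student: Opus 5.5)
The plan is to argue by contradiction, exploiting the local degree structure of $K_{4^-}^{+s}$. Suppose $a_1\ge 2$ and let $x,y\in A_1$ be two distinct leaves of $G$. I first rule out $xy\in E(G)$: if $x$ and $y$ were adjacent, then $\{x,y\}$ would induce a $K_2$-component of $G$. Since $G$ is connected in the current case and $n\ge s+4\ge 5$, this is impossible. Hence $xy\notin E(G)$, and by saturation $G+xy$ contains a copy $H$ of $K_{4^-}^{+s}$ with $xy\in E(H)$. (Claim~\ref{clm:distance} gives $d(x,y)=2$ here, but the argument below does not need it.)

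Next I record the degree sequence of $H$. Since $K_3^-=P_3$, we have $H=K_1\vee(P_3\cup sK_1)$. Let $c$ be the apex and $u_1u_2u_3$ the path. The degrees in $H$ are then
\[
d_H(c)=s+3\ge 4,\quad d_H(u_2)=3,\quad d_H(u_1)=d_H(u_3)=2,
\]
and each of the $s$ isolated vertices of $F$ has degree $1$ in $H$. The edges of $H$ are of two kinds: edges incident to $c$, and the two path edges $u_1u_2$ and $u_2u_3$. Every path edge has $u_2$ as an endpoint. Thus every edge of $H$ has an endpoint of $H$-degree at least $3$.

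The key step is to compare this with the endpoints of $xy$. We have $d_{G+xy}(x)=d_{G+xy}(y)=2$, so $d_H(x),d_H(y)\le 2$. Consequently neither $x$ nor $y$ can play the role of $c$ or of $u_2$ in $H$. This contradicts the fact that the edge $xy$ of $H$ must have an endpoint of $H$-degree at least $3$. Hence $a_1\le 1$.

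I do not expect a real obstacle here. The only points needing care are the separate treatment of the adjacent case, which uses connectivity and $n\ge 5$, and stating cleanly that every edge of $K_1\vee(P_3\cup sK_1)$ is incident to $c$ or to $u_2$.
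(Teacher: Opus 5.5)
Your proof is correct and follows the paper's argument essentially verbatim. You rule out $xy\in E(G)$ using connectivity and $n\ge s+4$, then note that every edge of $K_{4^-}^{+s}$ has an endpoint of degree at least $3$, while $d_{G+xy}(x)=d_{G+xy}(y)=2$; your explicit degree computation for $K_1\vee(P_3\cup sK_1)$ simply spells out the fact the paper states without detail.
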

\begin{proof}
Otherwise, assume that $x$ and $y$ are two leaves in $G$.
Since $n\ge s+4$ and $G$ is connected, $xy\notin E(G)$.
Then $G+xy$ has a $K_{4^-}^{+s}$-copy containing $xy$.
Since each edge in $K_{4^-}^{+s}$ is incident with a vertex of degree at least 3, and $d_{G+xy}(x)= d_{G+xy}(y)=2$,
$xy$ cannot be any edge in a $K_{4^-}^{+s}$-copy, a contradiction.
\end{proof}

Thus, we consider the following two cases.

\begin{subcase}
$a_1=1$.
\end{subcase}
If $a_2\le 2$, then $\sum_{u\in V(G)}d_G(u)\ge 1+2a_2+3(n-1-a_2)=3n-2-a_2\ge 3n-4$.
By Claim~\ref{clm:degreesum}, $e(G)\ge \lceil\frac{3n-4}{2}\rceil$.
If $a_2\ge 3$, let $v'$ be the unique leaf in $G$ and $v$ be the neighbour of $v'$.
By Claim~\ref{clm:distance}, $d(v', x)\le 2$ for every $x\in A_2$.
Thus, each vertex in $A_2$ is adjacent to $v$.
Thus, $d_{G}(v)\geq a_2+1$
and $$\sum\limits_{u\in V(G)}d_G(u)\ge d_G(v)+1+2a_2+3(n-2-a_2)\ge (a_2+1)+1+2a_2+3(n-2-a_2)= 3n-4.$$
 By Claim~\ref{clm:degreesum}, $e(G)\ge \lceil\frac{3n-4}{2}\rceil$.

\begin{subcase}
$a_1=0$.
\end{subcase}
First we assume that $a_2\le 4$. Then $\sum\limits_{u\in V(G)}d_G(u)\ge 2a_2+3(n-a_2)=3n-a_2\ge 3n-4$.
Together with Claim~\ref{clm:degreesum}, $e(G)\ge \lceil\frac{3n-4}{2}\rceil$.

Next we consider the case when $a_2\ge 5$.
If $\cap_{u\in A_2} N_G(u)\neq\emptyset$, then
$G$ contains a vertex, say $x$, such that $A_2\subseteq N_G(x)$. So, $d_G(x)\ge a_2$ and $\sum\limits_{u\in V(G)}d_G(u)\ge d_G(x)+2a_2+3(n-1-a_2)\ge 3n-3$.
Thus, $e(G)\ge \lceil\frac{3n-4}{2}\rceil$.
Now we assume that $\cap_{u\in A_2} N_G(u)=\emptyset$.

If there exist two vertices $x$ and $y$ in $A_2$ such that $xy\in E(G)$, then
$N_G(x)\cap N_G(y)= \emptyset$.
Otherwise, assume that $N_G(x)\cap N_G(y)=\{z\}$. Then for any $v\in A_2\backslash \{x, y\}$, since $d(x, v)\le 2$ and $d(y, v)\le 2$ by Claim~\ref{clm:distance}, $vz\in E(G)$.
It follows that $\cap_{u\in A_2} N_G(u)\neq\emptyset$, a contradiction.
Let $N_G(x)\backslash\{y\}=\{x_1\}$ and $N_G(y)\backslash\{x\}=\{y_1\}$, see Fig.~\ref{fig:K_4*} (a).
Since for any $z\in A_2\backslash\{x, y\}$, $d(x,z)\le 2$ and $d(y,z)\le 2$, we have $N_G(z)=\{x_1, y_1\}$.
Thus, $d_G(x_1)+d_G(y_1)\ge 2(a_2-2)+1+1=2a_2-2$.
Hence,
\begin{align*}
\sum_{u\in V(G)}d_G(u) &\ge d_G(x_1)+d_G(y_1)+2a_2+3(n-2-a_2) \\
& \ge (2a_2-2)+2a_2+(3n-6-3a_2)\\
& =3n+a_2-8\\
& \ge 3n-3,
\end{align*}
 which gives $e(G)\ge \lceil\frac{3n-4}{2}\rceil$.
 \begin{figure}[H]
\begin{center}
\includegraphics*[width=14cm]{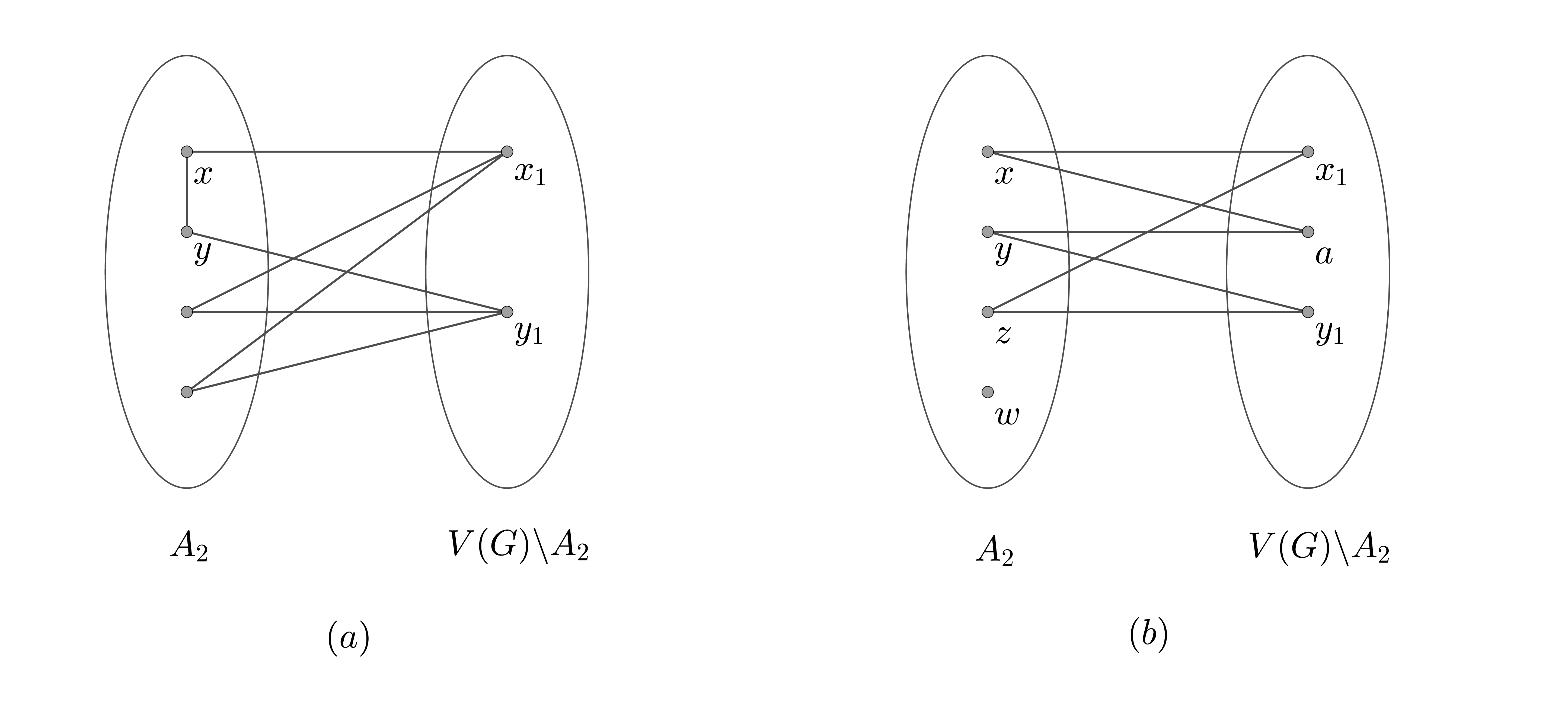}
\end{center}
\vspace*{-0.5cm}
\caption{Two graphs occurred in the proof of Subcase 1.2.}
\label{fig:K_4*}
\end{figure}

Now, we assume that $G$ has no adjacent vertices of degree 2.
This means for any $u \in A_2$, each neighbour of $u$ is of degree at least 3.
Also, by Claim~\ref{clm:distance}, any two vertices in $A_2$ share a common neighbour.
Combining this fact with $\cap_{u\in A_2} N_G(u)=\emptyset$, there exist two vertices $x,y\in A_2$ such that $x$ and $y$ have a unique common neighbour, say $a$.
Let $N_G(x)\backslash\{a\}=\{x_1\}$ and $N_G(y)\backslash\{a\}=\{y_1\}$, see Fig.~\ref{fig:K_4*} (b).
Since there exists a vertex $z\in A_2$ such that $za\notin E(G)$, we have $zx_1\in E(G)$ and $zy_1\in E(G)$.
Then for any $w\in A_2\backslash\{x,y,z\}$, $|N_G(w)\cap \{a, x_1, y_1\}|=2$.
Otherwise, $d(w, x)>2$ or $d(w, y)>2$ or $d(w, z)>2$, a contradiction.
So, $d_G(a)+d_G(x_1)+d_G(y_1)\ge 2a_2$. By our assumption that $a_2\ge 5$,
we have$$\sum\limits_{u\in V(G)}d_G(u)\ge d_G(a)+d_G(x_1)+d_G(y_1)+2a_2+3(n-3-a_2)\ge 3n+a_2-9\ge 3n-4.$$
Thus, by Claim~\ref{clm:degreesum}, $e(G)\ge \lceil\frac{3n-4}{2}\rceil$.

Combining the above arguments, $e(G)\ge \lceil\frac{3n-4}{2}\rceil$ if $G$ is connected.

\begin{case}
$G$ is disconnected.
\end{case}
Suppose that $G_{1}, G_{2},\ldots, G_{t}\,(t\geq 2)$ are the components of $G$. If $\delta(G)\ge 3$, then $e(G)> \lceil\frac{3n-4}{2}\rceil$. So, we may assume that there exists a vertex $u\in V(G_1)$ with $d_{G}(u)\le 2$. Take $v\in V(G)\backslash V(G_1)$. Then $G+uv$ has a $K_{4^-}^{+s}$-copy, say $H$, containing $uv$. Since $uv$ is not contained in any triangle of $G+uv$, $uv$ must be the pendant edge in $H$. As $d_{G+uv}(u)\le 3$, we have $d_H(u)=1$ and $d_H(v)=s+3$. Thus, every vertex $v\in V(G)\backslash V(G_1)$ satisfies $d_G(v)=s+2\ge 3$.
Let $|V(G_1)|=n_1$. If $n_1\le s+3$, then $G_1 \cong K_{n_1}$.
Since $n\ge 5$, $$\sum\limits_{u\in V(G)}d_G(u)\ge3(n-n_1)+n_1(n_1-1)=3n+(n_1-2)^2-4\ge 3n-4.$$
By Claim~\ref{clm:degreesum}, $e(G)\ge \lceil\frac{3n-4}{2}\rceil$.
If $n_1\ge s+4$, then $G_1$ is a connected $K_{4^-}^{+s}$-saturated graph.
From the connected case, we have $\sum\limits_{u\in V(G_1)}d_G(u)\ge 3n_1-4$. Thus,
$$\sum\limits_{u\in V(G)}d_G(u)=\sum\limits_{u\in V(G_1)}d_G(u)+\sum\limits_{u\in V(G)\backslash V(G_1)}d_G(u)\ge (3n_1-4)+3(n-n_1)= 3n-4.$$
By Claim~\ref{clm:degreesum}, $e(G)\ge \lceil\frac{3n-4}{2}\rceil$.

This completes the proof.
\end{proof}

\section{Saturation number for $K_{p^-}^{+1}\,(p\ge 5)$}

In this section, we prove Theorem~\ref{thm:Kp*}. We recall the statement of Theorem~\ref{thm:Kp*}.

\noindent\textbf{Theorem~\ref{thm:Kp*}.}
Let $p\ge 5$, $n\ge p+1$ and $n\equiv k\pmod{p}$. Then $$\sat(n, K_{p^-}^{+1})=\frac{(p-1)n}{2}+\frac{k(k-p)}{2}.$$

\subsection{Preliminary results}
We first prove several results that are used repeatedly in the proof of Theorem~\ref{thm:Kp*}.

\begin{observation}\label{ob:no K_p}
Let $G$ be a $K_{p^-}^{+1}$-saturated graph and $G'$ be any component of $G$.
If $|V(G')|\ge p+1$, then $G'$ contains no $K_p$-copy.
In particular, if $G$ is connected and $|V(G)|\ge p+1$, then $G$ contains no $K_p$-copy.
\end{observation}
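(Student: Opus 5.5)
We argue by contradiction. Let $G'$ be a component of $G$ with $|V(G')|\ge p+1$, and suppose $G'$ contains a copy of $K_p$ on a vertex set $S$. We will show that $G$ then contains a copy of $K_{p^-}^{+1}=K_1\vee(K^-_{p-1}\cup K_1)$, which contradicts the assumption that $G$ is $K_{p^-}^{+1}$-free.

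The key observation is structural. $K_{p^-}^{+1}$ consists of a $K_p^-$ (the apex together with the $K^-_{p-1}$) plus one pendant vertex attached to the apex. Since $K_p$ contains $K_p^-$ as a spanning subgraph, it suffices to find a vertex $v\in S$ that has a neighbour $w\notin S$. Take $v$ as the apex. The $p-1$ vertices of $S\setminus\{v\}$ span a $K_{p-1}\supseteq K^-_{p-1}$ and are all adjacent to $v$. The vertex $w$ is then the isolated $K_1$ of $F$, joined to $v$. The vertex $w$ must be distinct from the $K^-_{p-1}$ vertices, which holds because $w\notin S$; the other non-adjacencies are irrelevant, since we only need a (not necessarily induced) subgraph. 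Altogether this gives $p+1$ distinct vertices forming a $K_{p^-}^{+1}$-copy.

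It remains to produce such $v$ and $w$. Since $G'$ is connected and $|V(G')|\ge p+1>|S|$, the set $V(G')\setminus S$ is nonempty. By connectivity, there is an edge of $G'$ between $S$ and $V(G')\setminus S$, which supplies the required $v$ and $w$. The ``in particular'' statement follows immediately by taking $G'=G$.

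There is no real obstacle here. The only points needing care are that the copy is allowed to be non-induced, so $K_{p-1}$ containing $K^-_{p-1}$ is enough, and that the pendant vertex must lie outside $S$, which is exactly where the hypothesis $|V(G')|\ge p+1$ is used. Note also that the saturation property is not used at all, only $K_{p^-}^{+1}$-freeness.
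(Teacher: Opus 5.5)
Your proof is correct: in a component with at least $p+1$ vertices, a $K_p$ on $S$ has a vertex $v$ with a neighbour $w\notin S$. Taking $v$ as the apex, $S\setminus\{v\}$ as the (non-induced) $K^-_{p-1}$, and $w$ as the pendant vertex gives a $K_{p^-}^{+1}$-copy, so only $K_{p^-}^{+1}$-freeness is needed. The paper states this observation without proof, and your argument is the one it relies on; it reuses the same reasoning at the end of the proof of Lemma~\ref{kpsubgraph} to conclude that a component containing a $K_p$ has exactly $p$ vertices.
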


\begin{lemma}\label{lem:N has K_p}
Let $G$ be a connected $K_{p^-}^{+1}$-saturated graph with $|V(G)|\ge p+1$. If $u$ is a vertex in $G$ such that $G[N_G[u]]$ contains a $K_{p}^-$-copy, then $d_G(u)=p-1$ and $G[N_G[u]]\cong K_{p}^-$.
\end{lemma}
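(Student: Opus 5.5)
Here $K_{p^-}^{+1}=K_1\vee(K^-_{p-1}\cup K_1)$. This is the graph $K_p^-$ together with one extra vertex $w$, joined by a pendant edge to a vertex $c$ of $K_p^-$, where $c$ is adjacent to all other vertices of the $K_p^-$. So $G$ contains a copy exactly when some vertex $c$ has a $K_p^-$-copy through it in which it is a full vertex, plus a neighbour outside that copy. The plan is to show: whenever $G[N_G[u]]$ contains a $K_p^-$-copy $Q$, then $u\in V(Q)$ and $N_G[u]=V(Q)$. Then $d_G(u)=p-1$, and $G[N_G[u]]$ is $K_p^-$ or $K_p$. The case $K_p$ is ruled out by Observation~\ref{ob:no K_p}, since $G$ is connected with $|V(G)|\ge p+1$.

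\textbf{Step 1: reduce to $u\in V(Q)$.} Let $Q\subseteq G[N_G[u]]$ be a $K_p^-$-copy. If $u\notin V(Q)$, then $u$ is adjacent to all $p$ vertices of $Q$. Hence $G[V(Q)\cup\{u\}]$ contains $K_{p+1}^-$, and in particular a $K_p$ (take $u$ together with $Q$ minus one endpoint of the missing edge). This contradicts Observation~\ref{ob:no K_p}. So $u\in V(Q)$.

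\textbf{Step 2: $u$ is a full vertex of $Q$.} Every vertex of $Q$ lies in $N_G[u]$, so $u$ is adjacent to all other vertices of $Q$. Thus $u$ is not an endpoint of the missing edge of $Q$; if it were, that "missing" edge would be present and $Q$ would span a $K_p$, again contradicting Observation~\ref{ob:no K_p}. Hence $u$ has degree $p-1$ inside $Q$.

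\textbf{Step 3: $N_G[u]=V(Q)$.} Suppose $u$ has a neighbour $w\notin V(Q)$. Take $c=u$ as the centre, $Q-u\cong K^-_{p-1}$ as the nearly complete part (deleting a full vertex of $K_p^-$ leaves $K_{p-1}^-$), and $w$ as the isolated vertex. This gives a copy of $K_1\vee(K^-_{p-1}\cup K_1)$ in $G$, since only the edges from $c$ are needed and any additional edges between $w$ and $Q$ are harmless for a subgraph copy. This contradicts $G$ being $K_{p^-}^{+1}$-free. Therefore $d_G(u)=p-1$ and $G[N_G[u]]=G[V(Q)]$, which is $K_p^-$ by the non-$K_p$ argument.

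The main point needing care is Step 1, namely checking that $Q\cup\{u\}$ really forces a $K_p$ when $u\notin V(Q)$. The rest is immediate from the structure of $K_p^-$. Connectivity and $|V(G)|\ge p+1$ are used only through Observation~\ref{ob:no K_p}.
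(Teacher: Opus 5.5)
Your proof is correct and follows the paper's argument step for step. You show that $u\in V(Q)$, then that $u$ is a full vertex of $Q$ (otherwise a $K_p$ appears), then that $u$ has no neighbour outside $Q$ (otherwise $Q$ plus the pendant edge $uw$ is a copy of $K_{p^-}^{+1}$), and finally you exclude $K_p$ via Observation~\ref{ob:no K_p}. The only cosmetic difference is in Step 1: the paper observes directly that $G[V(Q)\cup\{u\}]$ contains a $K_{p^-}^{+1}$-copy, whereas you extract a $K_p$ and invoke the observation; both routes are valid.
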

\begin{proof}
Assume that $M'$ is a $K_{p}^-$-copy in $G[N_G[u]]$. Clearly, $u\in V(M')$, for otherwise, $G[V(M')\cup\{u\}]$ contains a $K_{p^-}^{+1}$-copy, a contradiction.
If $d_{M'}(u)=p-2$, since $M'$ is a subgraph of $G[N_G[u]]$, $G[N_G[u]]$ contains a $K_p$-copy, a contradiction by Observation~\ref{ob:no K_p}. So, $d_{M'}(u)=p-1$.
If $d_G(u)\ge p$, there exists a vertex $w\in N_G(u)\backslash V(M')$ such that $M'+uw$ is a $K_{p^-}^{+1}$-copy in $G$, a contradiction. So, $d_G(u)=d_{M'}(u)=p-1$, and thus, $N_G[u]=V(M')$.
It follows that $G[N_G[u]]\cong K_p^-$ or $K_p$. Since $G$ is $K_p$-free by Observation~\ref{ob:no K_p}, we have $G[N_G[u]]\cong K_p^-$.
\end{proof}

\begin{lemma}\label{lem:type2}
Let $G$ be a graph and $M'$ be a copy of $K_p^-$ in $G$. If $u,v\in V(M')$, then $|N_G(u)\cap N_G(v)|\ge p-3$ and $G[N_G(u)\cap N_G(v)]$ contains a $K_{p-3}$-copy.
\end{lemma}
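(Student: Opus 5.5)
We argue directly from the structure of $K_p^-$. Let $M'$ be a copy of $K_p^-$ in $G$ and let $ab$ be the unique non-adjacent pair in $V(M')$. Thus $M'$ is the complete graph on $V(M')$ with the single edge $ab$ removed. For $u,v\in V(M')$ we set $W=V(M')\setminus\{u,v,a,b\}$ and split according to how $\{u,v\}$ meets $\{a,b\}$.

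\textbf{Case $\{u,v\}=\{a,b\}$.} Every vertex of $V(M')\setminus\{a,b\}$ is adjacent in $M'$ to both $a$ and $b$. Hence $N_G(u)\cap N_G(v)$ contains this set of $p-2$ vertices, and these vertices induce a $K_{p-2}$ in $M'$. Both required conclusions follow, with one vertex to spare.

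\textbf{Case $|\{u,v\}\cap\{a,b\}|=1$.} Say $u=a$ and $v\ne b$. Then $v$ is adjacent to everything in $M'$. The common neighbours of $u$ and $v$ inside $M'$ are exactly $V(M')\setminus\{a,b,v\}$, a set of size $p-3$. Any two of these vertices are distinct from $a$ and $b$, so they are adjacent. This set therefore spans a $K_{p-3}$.

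\textbf{Case $\{u,v\}\cap\{a,b\}=\emptyset$.} Both $u$ and $v$ are adjacent to all other vertices of $M'$. So $N_G(u)\cap N_G(v)\supseteq V(M')\setminus\{u,v\}$, which has $p-2$ vertices. Deleting $a$ from this set leaves $W\cup\{b\}$, of size $p-3$, which contains no non-edge and hence spans a $K_{p-3}$. (One could also keep both $a$ and $b$ and obtain $K_{p-2}^-$, but only $K_{p-3}$ is needed.)

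In every case we have exhibited a subset of $N_G(u)\cap N_G(v)$ of size at least $p-3$ inducing a clique of order $p-3$. This gives $|N_G(u)\cap N_G(v)|\ge p-3$ together with the $K_{p-3}$-copy. There is no real obstacle. The only care needed is the middle case, where the bound $p-3$ is tight, and checking that the chosen vertex sets avoid the missing edge $ab$. The statement implicitly assumes $u\neq v$, which we take for granted.
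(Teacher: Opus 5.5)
Your proof is correct and takes essentially the same route as the paper. The paper splits on whether $\{u,v\}$ is the missing pair of $M'$, in which case the other $p-2$ vertices give a $K_{p-2}$ of common neighbours, or an edge of $M'$, in which case it uses that $uv$ lies in a $K_{p-1}$; it phrases this as a contradiction. Your Cases 2 and 3 just exhibit that $K_{p-1}$ explicitly, namely $V(M')\setminus\{b\}$ or $V(M')\setminus\{a\}$, a fact the paper uses without checking.
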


\begin{proof}
Suppose that $uv\in E(G)$ and, contrary to the claim, that $G[N_G(u)\cap N_G(v)]$ contains no $K_{p-3}$-copy. Then $uv$ does not belong to any $K_{p-1}$-copy, and thus, $uv$ cannot be an edge of a $K_p^-$-copy, a contradiction. So, $u$ and $v$ are the unique pair of non-adjacent vertices in $M'$. This directly implies that $|N_G(u)\cap N_G(v)|\ge p-3$ and $G[N_G(u)\cap N_G(v)]$ contains a $K_{p-3}$-copy.
\end{proof}

\begin{lemma}\label{lem:type2.2}
Let $v$ be a vertex in a graph $G$ such that $G[N_G(v)]$ contains no $K_{p-2}$-copy. Then for each $v'\in N_G(v)$, we have $G[N_G[v']]\ncong K_p^-$.
\end{lemma}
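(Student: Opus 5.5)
The plan is a proof by contradiction. Fix $v$ with $G[N_G(v)]$ free of $K_{p-2}$, take $v'\in N_G(v)$, and suppose $G[N_G[v']]\cong K_p^-$. Write $M=G[N_G[v']]$. Then $v\in N_G(v')\subseteq V(M)$, so $v$ and $v'$ are two vertices of the same $K_p^-$-copy $M$. I will exploit the structure of $K_p^-$ directly. The vertex set of $M$ is $N_G[v']$, which has $p$ vertices, and exactly one pair of vertices of $M$ is non-adjacent.

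First I identify this missing pair. Since $v'$ is adjacent to every other vertex of $M$ (as $V(M)=N_G[v']$), $v'$ has degree $p-1$ in $M$. Hence the missing edge is some pair $\{a,b\}$ with $a,b\neq v'$.

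Next I split on whether $v$ lies in the missing pair.

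\textbf{Case 1: $v\notin\{a,b\}$.} Then $v$ is adjacent in $M$ to all other $p-1$ vertices of $M$. Those $p-1$ vertices induce $K_p^-$ minus a vertex of full degree. Since $v\notin\{a,b\}$, this is a $K_{p-1}^-$ on $V(M)\setminus\{v\}$, which still contains the edge-deficient pair $\{a,b\}$. Deleting $a$ from $V(M)\setminus\{v\}$ leaves $p-2$ vertices that are pairwise adjacent. All of them lie in $N_G(v)$, so $G[N_G(v)]$ contains a $K_{p-2}$, a contradiction.

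\textbf{Case 2: $v\in\{a,b\}$, say $v=a$.} Then $v$ is adjacent in $M$ to every vertex of $M$ except $v$ itself and $b$. This is a set of $p-2$ vertices, namely $V(M)\setminus\{v,b\}$, and it contains $v'$. These $p-2$ vertices avoid both $a$ and $b$, so they are pairwise adjacent. Thus $G[N_G(v)]$ again contains a $K_{p-2}$, a contradiction.

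Both cases contradict the hypothesis, so $G[N_G[v']]\ncong K_p^-$. Alternatively, the same conclusion can be packaged via Lemma~\ref{lem:type2}. Applied to $u=v$ and $v'$, it shows that $G[N_G(v)\cap N_G(v')]$ contains a $K_{p-3}$; the argument above upgrades this to a $K_{p-2}$ inside $N_G(v)$, using either $v'$ itself or the explicit clique.

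The only delicate step is the bookkeeping of which vertex is the non-adjacent one. In particular, I must note that $v'$ cannot be in the missing pair, because $V(M)=N_G[v']$. Once that is settled, each case is a direct count, and no saturation property of $G$ is needed.
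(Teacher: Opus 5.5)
Your proof is correct and matches the paper's argument. The paper observes that $v\in V(M')$ and that every vertex of a $K_p^-$ lies in a $K_{p-1}$, which would force a $K_{p-2}$ in $G[N_G(v)]$; your two cases (depending on whether $v$ is in the missing pair) just spell out that fact, and your remark that $v'$ cannot lie in the missing pair is true but never used.
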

\begin{proof}
Assume that $G[N_G[v']]\cong K_p^-$, say $M'$. Since $vv'\in E(G)$, $v\in V(M')$.
Since $G[N_G(v)]$ contains no $K_{p-2}$-copy, $v$ does not belong to any $K_{p-1}$-copy.
So, $v$ cannot be a vertex of any $K_p^-$-copy, a contradiction.
\end{proof}
\begin{lemma}\label{lem:x in two K}
Let $G$ be a connected $K_{p^-}^{+1}$-saturated graph with $|V(G)|\ge p+1$, and let $M_1, M_2$ be two distinct $K_p^-$-copies in $G$. If $x\in V(M_1)\cap V(M_2)$, then $d_{M_1}(x)=d_{M_2}(x)=p-2$.
\end{lemma}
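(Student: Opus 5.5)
We argue by contradiction: suppose $d_{M_1}(x)=p-1$, i.e. $x$ is adjacent to all other vertices of $M_1$ (the case $d_{M_2}(x)=p-1$ is symmetric). The key tool is Lemma~\ref{lem:N has K_p}: since $M_1\subseteq G[N_G[x]]$, it gives $d_G(x)=p-1$ and $G[N_G[x]]\cong K_p^-$, so in fact $N_G[x]=V(M_1)$ and $G[V(M_1)]=M_1$ is an induced $K_p^-$. Now $M_2$ is a $K_p^-$-copy containing $x$, and every neighbour of $x$ inside $M_2$ lies in $N_G(x)=V(M_1)\setminus\{x\}$. Since $d_{M_2}(x)\ge p-2$, at least $p-2$ of the other $p-1$ vertices of $M_2$ lie in $V(M_1)$.

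If $d_{M_2}(x)=p-1$, then $V(M_2)\subseteq N_G[x]=V(M_1)$. Both vertex sets have size $p$, so $V(M_2)=V(M_1)$. Because $G[V(M_1)]\cong K_p^-$ has exactly $\binom p2-1$ edges, $M_2$ uses all of them and $M_2=M_1$, contradicting distinctness.

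The remaining case is $d_{M_2}(x)=p-2$. Let $y$ be the unique vertex of $M_2$ not adjacent to $x$ in $M_2$. Then $y\notin N_G[x]$ is forced, since otherwise $V(M_2)\subseteq V(M_1)$ and the same counting gives $M_2=M_1$. Also $V(M_2)\setminus\{y\}=\{x\}\cup S$ with $S\subseteq V(M_1)\setminus\{x\}$ and $|S|=p-2$, and in $M_2$ the set $\{x\}\cup S$ induces a $K_{p-1}$, while $y$ is adjacent to all of $S$. Since $M_1$ misses exactly one edge $ab$ with $a,b\neq x$, and $S\cup\{x\}$ is a clique, $S$ contains at most one of $a,b$.

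Let $w$ be the unique vertex of $V(M_1)\setminus(S\cup\{x\})$. Two subcases arise:

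\begin{enumerate}[(i)]
\item If $w\in\{a,b\}$, then $S\cup\{x\}\cup\{w\}$ is $M_1$ itself. Adding $y$, which is adjacent to all of $S$ and to a vertex outside $M_1$, we pick $s\in S$ and observe that $M_1$ together with the edge $sy$ is a copy of $K_{p^-}^{+1}$ in $G$. Here $s$ is adjacent to all of $M_1$ except possibly one vertex, and must play the role of the apex; the missing edge $ab$ has to sit inside the $K_{p-1}^-$ part, not at $s$.
\item Otherwise $S$ contains one of $a,b$, or $S$ avoids $\{a,b\}$ except through $w$. In this case $G[S\cup\{x,w\}]$ is a $K_p^-$ whose missing edge lies in the $K_{p-1}^-$ part.
\end{enumerate}

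More cleanly, choose $s\in S\setminus\{a,b\}$; this is possible since $|S|=p-2\ge 3$ and $S$ contains at most one of $a,b$. Then $s$ is adjacent to every vertex of $V(M_1)$, so $G[V(M_1)]$ with apex $s$ is $K_1\vee K_{p-1}^-$. Adding the edge $sy$, where $y\notin V(M_1)$, gives $K_1\vee(K_{p-1}^-\cup K_1)=K_{p^-}^{+1}\subseteq G$. This contradicts $G$ being $K_{p^-}^{+1}$-free.

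The main obstacle is confirming that such an $s$ exists and is adjacent to all of $V(M_1)$. This uses $p\ge5$, so that $|S|\ge 3$ and removing $a,b$ still leaves a vertex, together with the fact that the only non-edge of $G[V(M_1)]$ is $ab$ with $x\notin\{a,b\}$.
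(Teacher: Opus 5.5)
Your proof is correct and follows the paper's argument: use Lemma~\ref{lem:N has K_p} to get $G[N_G[x]]=M_1$, rule out $d_{M_2}(x)=p-1$ by edge counting, and when $d_{M_2}(x)=p-2$ attach the vertex of $M_2$ outside $N_G[x]$ (your $y$, the paper's $x^*$) as a pendant to a full-degree vertex of $M_1$ lying in $N_{M_2}(x)$. You can delete the tentative subcases (i)/(ii): $w\in\{a,b\}$ is forced, so (ii) is vacuous. Also, choosing $s$ only needs $|S|\ge 2$, i.e.\ $p\ge 4$, not $p\ge 5$.
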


\begin{proof}
Otherwise, assume that $d_{M_1}(x)=p-1$. Then $M_1$ is a $K_p^-$-copy in $G[N_G[x]]$.
By Lemma~\ref{lem:N has K_p}, $d_G(x)=p-1$ and $G[N_G[x]]\cong K_p^-$. So, $G[N_G[x]]=M_1$.
If $d_{M_2}(x)=p-1$, then $d_{M_2}(x)=d_G(x)$ and $V(M_2)=N_G[x]=V(M_1)$. Note that $G[N_G[x]]\cong K_p^-$. Then $M_1=M_2$, a contradiction.
So, $d_{M_2}(x)=p-2$.

Let $V(M_1)=\{x,x_1,\ldots, x_{p-1}\}$ and $d_{M_1}(x_{p-2})=d_{M_1}(x_{p-1})=p-2$, see Figure~\ref{fig:K_p^*}.
Since $d_{M_2}(x)=p-2$, the vertices of $N_G[x]$ induce a $K_{p-1}$-copy in $M_2$. As $N_G[x]\subseteq V(M_1)$, we may assume that $G[\{x,x_1,\ldots, x_{p-2}\}]$ is exactly this $K_{p-1}$.
Let $V(M_2)\backslash N_G[x]=\{x^*\}$. Then $x^*$ is adjacent to all vertices in $M_2$, except for $x$.
Thus, $x_1x^*\in E(G)$ and $M_1+x_1x^*$ is a $K_{p^-}^{+1}$-copy in $G$, a contradiction.

Summarizing the above, $d_{M_1}(x)=d_{M_2}(x)=p-2$.
\end{proof}
 \begin{figure}[H]
\begin{center}
\includegraphics*[width=11cm]{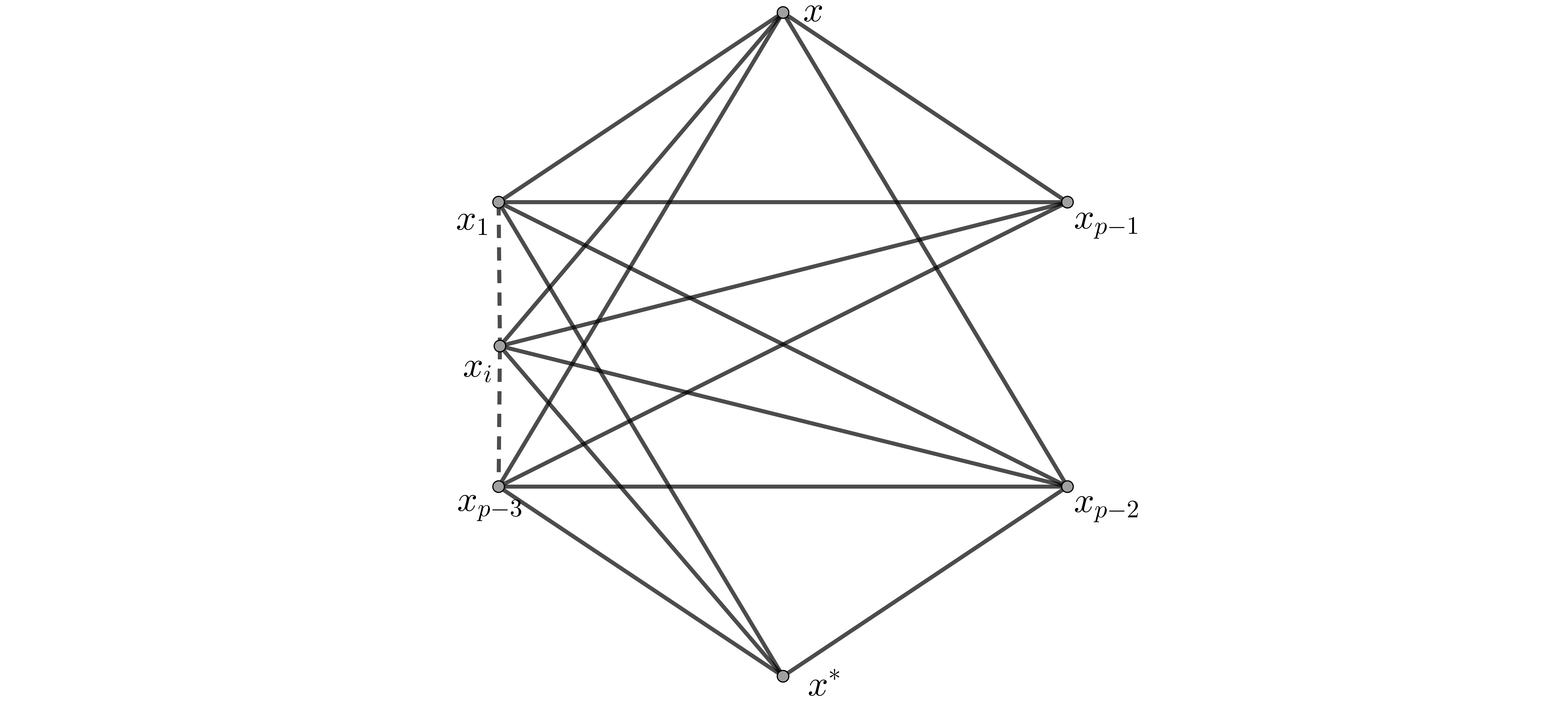}
\end{center}
\vspace*{-0.7cm}
\caption{The local structure of $G$ in Lemma~\ref{lem:x in two K}.}
\label{fig:K_p^*}
\end{figure}

\begin{lemma}\label{kpsubgraph}
Let $G$ be a disconnected $K_{p^-}^{+1}$-saturated graph of order $n\ge p+1$. Then all but at most one component of $G$ are $K_p$-copies.
\end{lemma}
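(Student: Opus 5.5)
The plan is to look at which edges between different components can be added, and then show that every component other than a possible exceptional one has to be a $K_p$. Here $K_{p^-}^{+1}=K_1\vee(K_{p-1}^-\cup K_1)$. It consists of a $K_p^-$, namely the apex $w$ together with the $K_{p-1}^-$, plus one pendant vertex attached to the apex $w$. The apex has degree $p$, and the two non-adjacent vertices of the $K_{p-1}^-$ are adjacent to $w$.

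Let $G_1,G_2$ be distinct components, and take $u\in V(G_1)$, $v\in V(G_2)$. Since $uv$ is a bridge of $G+uv$, it lies in no cycle. So in any copy $H$ of $K_{p^-}^{+1}$ in $G+uv$ that uses $uv$, the edge $uv$ must be the pendant edge $wz$, because every other edge of $K_{p^-}^{+1}$ lies on a triangle. Hence one endpoint, say $v$, is the apex, and $G_2$ contains a $K_p^-$ in which $v$ is adjacent to all other $p-1$ vertices. Call such a vertex \emph{good}: $v$ is good if $G[N_G[v]]$ contains a $K_p^-$ with $v$ of degree $p-1$ in it. Conversely, for every such non-edge $uv$, at least one of $u,v$ is good. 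It follows that in any two distinct components, at least one of them has all of its vertices good. Indeed, if $u\in G_1$ and $v\in G_2$ are both not good, then $G+uv$ contains no copy, which is a contradiction. So at most one component contains a non-good vertex.

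Next I show that a component $G'$ in which every vertex is good must be a $K_p$. Suppose first that $|V(G')|\ge p+1$. The proof of Lemma~\ref{lem:N has K_p} applies to any component of order at least $p+1$, using Observation~\ref{ob:no K_p} for components. So every vertex $v$ has $d(v)=p-1$ and $G[N[v]]\cong K_p^-$. Take $v$, and let $a,b$ be the non-adjacent pair in $N(v)$. Then $a$ has $p-2$ neighbours in $N[v]$, all of degree $p-1$ inside that closed neighbourhood. Vertices $c\in N(v)\setminus\{a,b\}$ have degree $p-1$ inside $N[v]$, so since $d(c)=p-1$ we get $N[c]=N[v]$. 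Now $a$ is good, so $G[N[a]]\cong K_p^-$, where $N[a]$ consists of $N[v]\setminus\{b\}$ together with one outside vertex $a'$. In this $K_p^-$ exactly one pair is non-adjacent. The vertex $a'$ is not adjacent to $v$ or to any $c$, because their neighbourhoods are already full. But $p\ge5$ gives at least two such vertices besides $a$, so $a'$ has at least two non-neighbours inside $N[a]$, which is a contradiction. Therefore $|V(G')|\le p$. A good vertex needs $p$ vertices in its component, so $|V(G')|=p$, and $G'$ contains a $K_p^-$ spanning it. If $G'=K_p^-$, then adding its missing edge creates only $K_p$ on $p$ vertices, with no room for the pendant vertex. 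That contradicts saturation, since $n\ge p+1$ and $G'$ is not all of $G$. Hence $G'\cong K_p$.

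Finally, for the case where every component is all-good, each component is $K_p$ by the above, and the conclusion holds. Otherwise exactly one component has a non-good vertex, and all the others are all-good, hence copies of $K_p$. I expect the main obstacle to be the step that excludes all-good components of order at least $p+1$. This needs a careful local argument around $N[v]$, as sketched above, and I would double-check the case where $a'$ coincides with something already counted.
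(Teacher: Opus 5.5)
Your proof is correct and follows essentially the same route as the paper. A cross-component edge is a bridge, so it must be the pendant edge; hence at most one component can contain a vertex that is not the apex of a $K_p^-$ in its closed neighbourhood. A component of order at least $p+1$ in which every vertex is such an apex is impossible, because a degree-$(p-2)$ vertex of $G[N[v]]\cong K_p^-$ cannot itself be an apex.

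The differences are minor. You reach that last contradiction directly from $d(c)=p-1$ for the other vertices of $N[v]$, instead of citing Lemma~\ref{lem:x in two K}. You also explicitly rule out, via saturation, an order-$p$ component isomorphic to $K_p^-$; the paper's assertion $|V(G_i)|\ge p+1$ leaves this case implicit.
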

\begin{proof}
Assume that $G_1, G_2,\ldots, G_t\,(t\ge 2)$ are the components of $G$ and that there exist $1\le i<j\le t$ such that $G_i$ and $G_j$ are both $K_p$-free. We claim that
if $G_i\, (i\in [t])$ is $K_p$-free, then $G_i$ contains a vertex $u$ such that $G[N_G[u]]$ contains no $K_p^-$-copy.

Suppose on the contrary that for any $u\in V(G_i)$, $G[N_G[u]]$ contains a $K_p^-$-copy.
Since $G_i$ is $K_p$-free, it follows that $G_i$ must be a connected $K_{p^-}^{+1}$-saturated graph with $|V(G_i)|\ge p+1$. By Lemma~\ref{lem:N has K_p}, $G[N_G[u]]\cong K_p^-$, say $M_1$.
Let $V(M_1)=\{u,u_1,\ldots, u_{p-1}\}$ and $d_{M_1}(u_{p-2})=d_{M_1}(u_{p-1})=p-2$.
Since $u_{p-2}\in V(G_i)$, by the assumption, we also have $G[N_G[u_{p-2}]]\cong K_p^-$, say $M_2$. Clearly, $d_{M_2}(u_{p-2})=p-1$.
Then $u_{p-2}\in V(M_1)\cap V(M_2)$, but $d_{M_2}(u_{p-2})=p-1$, a contradiction by Lemma~\ref{lem:x in two K}.

Thus, we can take $u\in V(G_i)$ and $v\in V(G_j)$ such that $G[N_G[u]]$ and $G[N_G[v]]$ are both $K_p^-$-free.
By the definition of $K_{p^-}^{+1}$-saturated graph, $G+uv$ has a $K_{p^-}^{+1}$-copy $M$ containing $uv$.
Since $uv$ is not contained in any $K_p^-$-copy of $G+uv$, $uv$ must be the pendant edge of $M$. Thus, $G[N_G[u]]$ or $G[N_G[v]]$ contains a $K_p^-$-copy, a contradiction.

So, there is at most one component of $G$ that is $K_p$-free.
Moreover, each component containing a $K_p$ has exactly $p$ vertices, for otherwise $G$ contains a $K_{p^-}^{+1}$-copy, a contradiction.
Therefore, all but at most one component of $G$ are $K_p$-copies.
\end{proof}

\subsection{Proof of Theorem 1.3}
Now we prove the saturation number of $K_{p^-}^{+1}$ when $p\ge 5$. Let $n\equiv k\pmod{p}$ and $$h(n, p)=\frac{(p-1)n}{2}+\frac{k(k-p)}{2}.$$ For $x\in V(G)$ and $S\subseteq V(G)$, let $d_S(x)=|N_G(x)\cap S|$ and $\overline{S} = V(G) \setminus S$.

\begin{lemma}\label{lem:connected}
Let $G$ be a connected $K_{p^-}^{+1}$-saturated graph of order $n\ge p+1$. Then $e(G)\ge h(n, p)$.
\end{lemma}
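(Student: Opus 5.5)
The plan is a degree-sum (discharging) argument. Writing $d(x)=d_G(x)$, we need $\sum_{x}d(x)\ge (p-1)n-k(p-k)$. For each vertex set $\phi(x)=d(x)-(p-1)$, and call $x$ \emph{low} if $\phi(x)<0$. It then suffices to show $\sum_x\phi(x)\ge -k(p-k)$. Since $k(p-k)\ge 0$, with equality exactly when $k=0$, the real target is the stronger statement $\sum_x\phi(x)\ge 0$, i.e. $e(G)\ge (p-1)n/2$. I would try to prove this for every connected saturated $G$ with $n\ge p+1$, and fall back to the weaker bound $-k(p-k)$ only if some configuration genuinely forces a negative total.

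\textbf{Step 1: behaviour of low vertices.} Let $u$ be low and $v\notin N_G[u]$. By saturation, $G+uv$ contains a copy $H$ of $K_{p^-}^{+1}$ through $uv$. There are two options.
\begin{itemize}
\item[(a)] $uv$ lies inside the $K_p^-$ part of $H$. Vertices of $K_p^-$ have degree at least $p-2$ there, so $d(u)\ge p-3$; moreover, by Lemma~\ref{lem:type2}, $G[N_G(u)\cap N_G(v)]$ contains a $K_{p-3}$-copy.
\item[(b)] $uv$ is the pendant edge of $H$. If $v$ is the apex, then $G[N_G[v]]$ contains $K_p^-$. By Lemma~\ref{lem:N has K_p}, this gives $d(v)=p-1$ and $G[N_G[v]]\cong K_p^-$. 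Otherwise $u$ is the apex, which forces $d(u)\ge p-1$ and is impossible.
\end{itemize}
Consequently, if $d(u)\le p-4$, then \emph{every} non-neighbour $v$ of $u$ satisfies $d(v)=p-1$ and $N_G[v]$ induces $K_p^-$. For such vertices $v$, Lemma~\ref{lem:x in two K} shows that the $K_p^-$-copies $G[N_G[v]]$ are pairwise vertex-disjoint, apart from overlaps at their two non-adjacent ``deficient'' vertices. Lemma~\ref{lem:type2.2} then restricts which vertices can be adjacent to these blocks.

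\textbf{Step 2: structure theorem.} Using Step 1, I would show that the vertex set splits into two parts:
\begin{itemize}
\item a family of blocks $B_i$, where each $B_i$ induces $K_p^-$ with apex vertices of degree exactly $p-1$ (so $\phi=0$);
\item a remainder $R$.
\end{itemize}
Because $G$ is connected and $K_p$-free (Observation~\ref{ob:no K_p}), each block must attach to the rest of $G$ through its two deficient vertices. Each of these has degree $p-2$ inside the block, so at least one of them must carry an external edge.

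\textbf{Step 3: discharging.} Each low vertex $u$ sends charge $-\phi(u)$ to neighbours with $\phi>0$. The charge is routed as follows.
\begin{itemize}
\item If $d(u)\ge p-3$, case (a) applies to every non-neighbour $v$. The common neighbourhood $N(u)\cap N(v)$ then contains a $K_{p-3}$, and these clique vertices, together with $u$ and $v$, have degree at least $p-2$ plus extra. Summing over the choices of $v$ should force enough surplus among the neighbours of $u$ to absorb a deficit of at most $2$.
\item If $d(u)\le p-4$, every non-neighbour of $u$ lies in blocks. I would charge the deficit of $u$ to the deficient vertices of the blocks, using the external edges they must carry by Step 2.
\end{itemize}
The goal is to show that every vertex ends with nonnegative charge, which gives $\sum\phi\ge0\ge -k(p-k)$.

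\textbf{Main obstacle.} I expect Step 3 in the regime where many vertices have degree $p-2$ or $p-3$ and are pairwise adjacent to be the hardest part: there case (a) gives only local clique information, and the surplus may be shared among several low vertices. If the global charge cannot be shown nonnegative, I would instead bound the number and total deficit of low vertices. The target here is a bound of at most $p-1$ such vertices with total deficit at most $k(p-k)$. To get it, I would use the maximality of the $K_{p-3}$ cliques in common neighbourhoods, together with a counting argument modulo $p$ on the block decomposition: $|R|\equiv n \pmod p$, so $|R|\equiv k$. I would then compare $e(G[R])$ with $\binom{k}{2}$, the value attained by the extremal configuration $K_k$ implicit in $h(n,p)$.
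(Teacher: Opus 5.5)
Your proposal has a genuine gap. Its ``real target'' is $\sum_x\phi(x)\ge 0$, i.e.\ $e(G)\ge\frac{(p-1)n}{2}$ for every connected saturated $G$ with $n\ge p+1$. This is false, so Step~3 cannot be carried out as designed.

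\emph{Counterexample.} Take a copy $M$ of $K_p^-$ with non-adjacent vertices $x,y$, and add a vertex $v$ adjacent to $x$ and $y$ only. This graph is connected with $n=p+1$.
\begin{itemize}
\item It is $K_{p^-}^{+1}$-free: its only $K_p^-$ is $M$, and the $p-2$ full vertices of $M$ have no outside neighbours.
\item It is saturated. Adding $vw$ with $w\in V(M)\setminus\{x,y\}$ hangs a pendant edge on a full vertex of $M$. Adding $xy$ turns $M$ into $K_p$, in which $x$ is a full vertex of some $K_p^-$, with pendant edge $xv$.
\item Yet $e(G)=\binom p2+1<\frac{p^2-1}{2}=\frac{(p-1)n}{2}$.
\end{itemize}
Every vertex except $v$ has degree exactly $p-1$, so no positive charge exists anywhere to absorb the deficit $p-3$ of $v$. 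In particular, the deficient vertices $x,y$, to which your second bullet routes charge, have $\phi=0$. For $p=5$ this also refutes your first bullet: $d(v)=2=p-3$, yet every non-neighbour of $v$ is of type (b). So the term $\frac{k(k-p)}{2}$ is genuinely needed in the connected case.

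\emph{The fallback.} The whole proof would therefore have to happen in your fallback, which is only a sketch, and its concrete targets are false.
\begin{itemize}
\item For $t\ge 2$, the graph $K_{p-3}\vee(K_1\cup tK_2)$ is connected and $K_{p^-}^{+1}$-saturated. It is even $K_p^-$-free, and each added edge completes a $K_p^-$ containing the $K_{p-3}$, from which a pendant edge leaves. It has $2t+1$ low vertices with total deficit $2t+2$, both unbounded in $t$. This breaks your ``at most $p-1$ low vertices, total deficit at most $k(p-k)$'' target.
\item The congruence $|R|\equiv n\pmod p$ fails because blocks can share deficient vertices. Two copies of $K_p^-$ on a common non-adjacent pair $\{x,y\}$, plus $v$ joined to $x,y$, is saturated with $n=2p-1$ but $R=\{v\}$.
\end{itemize}

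\emph{What is missing} is the mechanism by which the residue $k$ enters. The paper works from a minimum-degree vertex $v$ with $s=d(v)\le p-2$.
\begin{itemize}
\item Suppose every non-neighbour $u$ of $v$ is of pendant type. Then each such $u$ is a full vertex of a unique block $G[N[u]]\cong K_p^-$ whose two deficient vertices lie in $N(v)$. Hence $n=(p-2)m+s+1$, where $m$ is the number of blocks.
\begin{itemize}
\item If $2m>s$, the degree sum already gives $(p-1)n$.
\item If $2m\le s$, then $k=s+1-2m$ exactly. Every non-adjacent pair in $N(v)$ lies in a common block, so $e(G[N(v)])\ge\binom s2-m$, and $e(G)\ge\binom p2 m+\binom s2>h(n,p)$.
\end{itemize}
\item Suppose some non-neighbour of $v$ is of $K_p^-$-edge type. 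The paper proves bounds of the form $e(G)\ge\frac{2p-5}{2}n-\frac{p^2-4p+5}{2}$ (or $\frac{(p-1)n}{2}$). It converts these to $h(n,p)$ using $n\ge p+\frac{5+k^2-pk}{p-4}$.
\end{itemize}
Your Steps~1--2 correctly recover the block structure, but nothing in the proposal plays the role of either of these two arguments.
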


\begin{proof} If $\delta(G)\ge p-1$,
then $e(G)\ge\frac{(p-1)n}{2}\ge h(n, p)$. Now, suppose that $\delta(G)\le p-2$.
Let $v$ be a vertex of minimum degree in $G$. Assume that $d_G(v)=s(\le p-2)$ and $N_G(v)=\{v_1,v_2,\ldots, v_s\}$. Let $V_1=N_G(v)$ and $V_2=V(G)\backslash (V_1\cup \{v\})$.

Take $u\in V_2$. Then $G+uv$ contains a $K_{p^-}^{+1}$-copy $M_u$ with $uv\in E(M_u)$.
Note that within a $K_{p^-}^{+1}$-copy, the edge $uv$ has one of two possibilities: either $uv$ is the pendant edge, or $uv$ is contained in the unique $K_p^-$-copy of $K_{p^-}^{+1}$.
Based on this, we can partition $V_2$ into two subsets: $A=\{u\in V_2: uv \mbox{ is the pendant edge in } M_u\}$ and $B=\{u\in V_2: uv \mbox{ is an edge in the } K_p^- \mbox{-copy of } M_u\}$.
\setcounter{claim}{0}
\begin{claim}\label{clm:type A}
For any $u\in A$, $d_G(u)=p-1$ and $G[N_G[u]]\cong K_p^-$.
\end{claim}

\begin{proof}
 Since $uv$ is the pendant edge of $M_u$ in $G+uv$, and $d_{G+uv}(v)\le p-1$, we have $d_{M_u}(u)=p$, Then $G[N_G[u]]$ contains a $K_p^-$-copy. By Lemma~\ref{lem:N has K_p}, $d_G(u)=p-1$ and $G[N_G[u]]\cong K_p^-$.
\end{proof}

\begin{claim}\label{clm:AB disjoint}
$A\cap B=\emptyset$.
\end{claim}
\begin{proof}
Assume that $x\in A\cap B$. Then $G+vx$ contains a $K_{p^-}^{+1}$-copy $M_x$ such that $vx\in E(M_x)$.
 Since $x\in B$, by Lemma~\ref{lem:type2}, $G[V_1\cap N_G(x)]$ contains a $K_{p-3}$-copy. Together with $p\ge 5$, $G[V_1\cap N_G(x)]$ contains an edge.

Since $x\in A$, by Claim~\ref{clm:type A}, $G[N_G[x]]\cong K_p^-$, say $M'$.
Together with the fact that $G[V_1\cap N_G(x)]$ contains an edge, $M'$ has an edge in $G[V_1]$.
Thus, $V_1$ contains a $(p-1)$-degree vertex of $M'$, say $y$.
Note that $v\notin N_G(x)$. Then $v\notin V(M')$ and $M'+vy$ is a $K_{p^-}^{+1}$-copy in $G$, a contradiction.
This completes the proof of Claim~\ref{clm:AB disjoint}.
\end{proof}
Let $a=|A|$ and $b=|B|$. By Claim~\ref{clm:AB disjoint}, $a+b=n-s-1$. By Claim~\ref{clm:type A}, for any $u\in A$, $G[N_G[u]]\cong K_p^-$; we denote this $K_p^-$-copy by $M_u$. Also, we denote by $u_1$ and $u_2$ the two vertices of degree $p-2$ in $M_u$.
Then we have the following claim.

\begin{claim}\label{clm:A}
For any $u\in A$, $N_{\overline{A}}(u)=\{u_1, u_2\}$. Consequently, if $A\neq \emptyset$, then $a\ge p-2$, and there exist two vertices in $\overline{A}$ each of which has $p-2$ neighbours in $A$.
\end{claim}
\begin{proof}
Since $N_G[u]=V(M_u)$, it suffices to prove that $V(M_u)\cap A=V(M_u)\backslash\{u_1, u_2\}$. Clearly, $v\notin N_G(u)$. Then $V(M_u)\subseteq V_1\cup V_2$.

Let $x$ be a vertex in $M_u$ such that $d_{M_u}(x)=p-1$. If $x\in V_1$, since $v\notin V(M_u)$, $M_u+vx$ is a $K_{p^-}^{+1}$-copy in $G$, a contradiction. So, $x\in V_2$.
Notice that $M_u$ is also a $K_p^-$-copy in $G[N_G[x]]$. Then $G[N_G[x]]\cong K_p^-$ by Lemma~\ref{lem:N has K_p}. Furthermore, $vx$ is the pendant edge of a $K_{p^-}^{+1}$-copy in $G+vx$.
Thus, $x\in A$.

Now, we claim that $A\cap \{u_1, u_2\}=\emptyset$. Otherwise, we can assume that $u_1\in A$.
By Claim~\ref{clm:type A}, $G[N_G[u_1]]\cong K_p^-$, say $M_{u_1}$.
Then $u_1$ is a vertex in $V(M_u)\cap V(M_{u_1})$ with $d_{M_{u_1}}(u_1)=p-1$, a contradiction by Lemma~\ref{lem:x in two K}.
\end{proof}
 Let $\mathcal{M}=\{M|M \mbox{ is a } K_{p}^- \mbox{-copy in } G\}$ and $m=|\mathcal{M}|$.
Based on the partition of vertices in $V_2$, we proceed to analyze the following cases.
\setcounter{case}{0}
\begin{case}
$B=\emptyset$.
\end{case}

Since $n\ge p+1$ and $s\le p-2$, $a=n-s-1\ge 2$.

\begin{claim}\label{clm:M cap V1}
Let $M\in \mathcal{M}$. Assume that $x$ and $y$ are two vertices in $M$ with $d_{M}(x)=d_{M}(y)=p-2$. Then $V(M)\cap V_1=\{x, y\}$.
\end{claim}
\begin{proof}
If $V(M)\cap A=\emptyset$, then $V(M)\subseteq N_G[v]$. This is impossible because $|N_G[v]|\le p-1$.
Thus, there exists a vertex $u\in V(M)\cap A$. Since $u\in A$, by Claim~\ref{clm:type A}, $G[N_G[u]]\cong K_p^-$, say $M_1$. Then we claim that $M=M_1$. Otherwise, $u$ is a vertex in $V(M)\cap V(M_1)$ with $d_{M_1}(u)=p-1$, which contradicts Lemma~\ref{lem:x in two K}.
Thus, $M=M_1$ with $x$ and $y$ being two $(p-2)$-degree vertices in $M_1$.

Since $d_{M_1}(u)=p-1$, each vertex in $M_1$ is adjacent to $u$, and thus, $v\notin V(M_1)$.
Together with Claim~\ref{clm:A}, $V(M)\cap V_1=V(M)\backslash (A\cup \{v\})=V(M_1)\backslash (A\cup \{v\})=V(M_1)\backslash A=\{x, y\}$.
\end{proof}
\begin{claim}\label{clm:x in twoM}
Let $M\in \mathcal{M}$ and $x\in V(M)$. If $d_{M}(x)=p-2$, then $x\in V_1$ and $x$ may belong to other $K_p^-$-copies in $\mathcal{M}$; but if $d_{M}(x)=p-1$, then $x\in V_2$, and $x \notin V(M')$ for any other $M' \in \mathcal{M}$.
\end{claim}
\begin{proof}By Claim~\ref{clm:M cap V1}, two $(p-2)$-degree vertices of $M$ belong to $V_1$, while all $(p-1)$-degree vertices of $M$ belong to $V_2$.
By Lemma~\ref{lem:x in two K}, $x$ can be shared only if it has degree $p-2$ in every $M' \in \mathcal{M}$ containing it by two distinct graphs in $\mathcal{M}$. This directly proves this claim.
\end{proof}

By Claims~\ref{clm:M cap V1} and~\ref{clm:x in twoM}, $\sum\limits_{u\in V_1}d_{V_2}(u)\ge 2m(p-2)$.
Therefore, if $2m>s$, then $\sum\limits_{u\in V_1}d_{V_2}(u)\ge(s+1)(p-2)$.
By Claim~\ref{clm:type A} and $B=\emptyset$, we have $\sum\limits_{u\in V_2}d_G(u)=(n-s-1)(p-1)$.
Since $s=d_G(v)\ge 1$,  we have $$2e(G)\ge 2d_G(v)+\sum\limits_{u\in V_1}d_{V_2}(u)+\sum\limits_{u\in V_2}d_G(u)\ge 2s+(s+1)(p-2)+(n-s-1)(p-1)\ge (p-1)n,$$ and $e(G)\ge \frac{(p-1)n}{2}\ge h(n, p)$.
This completes this case.

We now assume that $2m\le s$. Recall our assumption that $N_G(v)=\{v_1,v_2,\ldots, v_s\}$.

\begin{claim}\label{clm:vivj}
Let $i, j\in [s]$ and $i\neq j$. If $\{v_i, v_j\}\not\subseteq V(M)$ for any $M\in \mathcal{M}$, then $v_iv_j\in E(G)$.
\end{claim}
\begin{proof}
Assume that $v_iv_j\in E(\overline{G})$. Then $G+v_iv_j$ has a $K_{p^-}^{+1}$-copy containing $v_iv_j$.
If $v_iv_j$ is the pendant edge of $K_{p^-}^{+1}$, $G[N_G[v_i]]$ or $G[N_G[v_j]]$ contains a $K_p^-$-copy.
Assume without loss of generality that $G[N_G[v_i]]$ contains a $K_p^-$-copy. By Lemma~\ref{lem:N has K_p}, $G[N_G[v_i]]$ is a $K_p^-$-copy in $\mathcal{M}$, say $M'$. Then $v_i\in V(M')\cap V_1$ and $d_{M'}(v_i)=p-1$, a contradiction to Claim~\ref{clm:M cap V1}.
 So, $v_iv_j$ must be an edge in the $K_p^-$-copy of $K_{p^-}^{+1}$.

 We now prove that $G[N_G[v]]\cong K^-_{p-1}$.

  Suppose that $v_i$ and $v_j$ have a common neighbour in $V_2$, say $u$. Since $B=\emptyset$,  we have $V_2=A$ and $u\in A$. Then by Claim~\ref{clm:type A}, $G[N_G[u]]\cong K_p^-$, say $M_1$. Since $\{v_i, v_j\}\subseteq N_G(u)$, $\{v_i, v_j\}\subseteq V(M_1)$, a contradiction to our assumption.
Thus, $N_G(v_i)\cap N_G(v_j)$ is contained in $N_G[v]\backslash \{v_i, v_j\}$.

Since $v_iv_j$ is an edge in a $K_p^-$-copy, by Lemma~\ref{lem:type2}, $|N_G(v_i)\cap N_G(v_j)|\ge p-3$. By our assumption that $d_G(v)\le p-2$, $|N_G[v]\backslash \{v_i, v_j\}|\le p-3$. Together with $\big(N_G(v_i)\cap N_G(v_j)\big)\subseteq \big(N_G[v]\backslash \{v_i, v_j\}\big)$,  we have $N_G(v_i)\cap N_G(v_j)=N_G[v]\backslash \{v_i, v_j\}$ and $|N_G(v_i)\cap N_G(v_j)|=|N_G[v]\backslash \{v_i, v_j\}|=p-3$.
Since $G[N_G(v_i)\cap N_G(v_j)]$ contains a $K_{p-3}$-copy by Lemma~\ref{lem:type2}, $G[N_G(v_i)\cap N_G(v_j)]\cong K_{p-3}$ and $G[N_G(v_i)\cap N_G(v_j)\cup\{v_i, v_j\}]\cong K^-_{p-1}$. As $N_G(v_i)\cap N_G(v_j)\cup\{v_i, v_j\}=N_G[v]$, $G[N_G[v]]\cong K^-_{p-1}$.

Since $n\ge p+1$ and $d_G(v)\le p-2$, $|V_2|=|V(G)\backslash N_G[v]|\ge n-p+1>0$. As $V_2=A$, $A\neq\emptyset$. Take $x\in A$. By Claim~\ref{clm:type A}, $G[N_G[x]]\cong K_p^-$, say $M_2$. By Claim~\ref{clm:M cap V1}, $V(M_2)\cap V_1$ consists of two non-adjacent vertices of $M_2$.
By Observation~\ref{ob:no K_p}, $G$ contains no $K_p$-copy. It follows that these two vertices are also non-adjacent in $G$.
As above, no vertex pair other than $\{v_i, v_j\}$ is non-adjacent in $G[V_1]$. Then $\{v_i, v_j\}\subseteq V(M_2)$.
This contradicts the assumption that $\{v_i, v_j\}\not\subseteq V(M)$ for any $M\in \mathcal{M}$.
\end{proof}

By Claim~\ref{clm:vivj}, $e(G[V_1])\ge \binom{s}{2}-m$.
By Claim~\ref{clm:x in twoM}, for any two $M_1, M_2\in \mathcal{M}$, if $x\in V(M_1)\cap V(M_2)$, then $x\in V_1$.
Thus,
\begin{equation}\label{eq:n}
n=(p-2)m+s+1=p m+(s+1-2m).
\end{equation}
Since $s\le p-2$ and $m\ge 1$, we have $s+1-2m< p-1$.
Together with $2m\le s$, we have $s+1-2m>0$. Thus, $0<s+1-2m<p-1$.
Since $n\equiv k\pmod{p}$, by (\ref{eq:n}), we have $k=s+1-2m$.
Then $h(n, p)=\binom{p}{2}m+\binom{s+1-2m}{2}$.
By Claims~\ref{clm:type A} and~\ref{clm:x in twoM}, $e(V_1, V_2)+e(G[V_2])=\big(\binom{p}{2}-1\big)m$.
Together with our assumption that $2m\le s$, we have
\begin{align*}
e(G) & = d_G(v)+e(G[V_1])+\big(e(V_1, V_2)+e(G[V_2])\big) \\
 & \ge s+\big(\binom{s}{2}-m\big)+\Big(\binom{p}{2}-1\Big)m\\
 & \ge \binom{p}{2}m+\binom{s}{2}\\
  & > \binom{p}{2}m+\binom{s+1-2m}{2}\\
  & =h(n, p).
\end{align*}
\begin{case}
$B\neq\emptyset$.
\end{case}
Recall from Claims~\ref{clm:type A} and~\ref{clm:A} that, for any $u\in A$, $d_G(u)=p-1$, and $d_{\overline{A}}(u)=2$. Thus, $$e(G[A])+e(A, V(G)\backslash A)=\frac{p-3}{2}a+2a=\frac{p+1}{2}a.$$
Since $V_2=A\cup B$ and $A\cap B=\emptyset$, we have
\begin{equation}\label{eq:eg}
\begin{aligned}
e(G) &=e(G[N_G[v]])+e(V_1, A\cup B)+e(G[A\cup B]) \\
& =e(G[N_G[v]])+\Big(e(G[A])+e(A, V(G)\backslash A)\Big)+\Big(e(V_1, B)+e(G[B])\Big) \\
& = e(G[N_G[v]])+\frac{p+1}{2}a+\Big(e(V_1, B)+e(G[B])\Big).
\end{aligned}
\end{equation}

 Recall that $n=tp+k$, where $0\le k\le p-1,\, t\ge 1$ and $n\ge p+1$. Then we have the following claim.
\begin{claim}\label{clm:n lowerbound}
$n\ge p+\frac{5+k^2-pk}{p-4}$.
\end{claim}

\begin{proof}
If $k=0$, then $n\ge p+1$ and $p\ge 5$ imply $t\ge 2$ and $n\ge 2p\ge p+\frac{5}{p-4}$.
 If $k\ge 1$, since $p\ge 5$, $n\ge tp+1\ge tp+\frac{-p+6}{p-4}\ge tp+\frac{(k-1)(k-(p-1))-p+6}{p-4}\ge p+\frac{5+k^2-pk}{p-4}$, as desired.
\end{proof}

\begin{claim}\label{clm:yinB dv1y}
For any $u\in B$, $d_{V_1}(u)\ge p-3$ and $G[V_1\cap N_G(u)]$ contains a $K_{p-3}$-copy.
\end{claim}
\begin{proof}
Since $u\in B$, $G+uv$ has a $K_p^-$-copy containing $uv$. Then by Lemma~\ref{lem:type2},
$G[V_1\cap N_G(u)]$ contains a $K_{p-3}$-copy. Thus, $d_{V_1}(u)\ge p-3$.
\end{proof}

\begin{claim}\label{clm:V1congK p-2}
If $G[V_1]\cong K_{p-2}$, then $e(G)> h(n, p)$.
\end{claim}
\begin{proof}
Assume that $G[V_1]\cong K_{p-2}$.
By Claim~\ref{clm:yinB dv1y}, for any $y\in B$, $d_{V_1}(y)\ge p-3$.

If $d_{V_1}(y)=p-2$, then $G[V_1\cup \{v, y\}]\cong K_p^-$ with $vy\notin E(G)$. Since $G$ is $K_{p^-}^{+1}$-free, $N_G[v_i]=V_1 \cup \{v, y\}$ for each $v_i\in V_1$. Then $y$ is a cut vertex in $G$.
Since $n\ge p+1$, there exists a vertex in $V_2\backslash \{y\}$, say $u$.
Then $d(u, v)=d(u, y)+d(y, v)\ge 3$, and hence $uv$ must be the pendant edge of a $K_{p^-}^{+1}$-copy in $G+uv$. By Claim~\ref{clm:type A}, $G[N_G[u]]\cong K_{p}^-$, say $M_1$.
Let $u_1$ and $u_2$ be two $(p-2)$-degree vertices of $M_1$. Then $d(u_1, v)\ge 3$ or $d(u_2, v)\ge 3$, as $y$ is a cut vertex in $G$. Assume that $d(u_1, v)\ge 3$.
Also, by Claim~\ref{clm:type A}, $G[N_G[u_1]]\cong K_{p}^-$, say $M_2$.
Then $u_1\in V(M_1)\cap V(M_2)$ and $d_{M_2}(u_1)=p-1$, a contradiction by Lemma~\ref{lem:x in two K}.

So, $d_{V_1}(y)= p-3$ for any $y\in B$, and thus, $e(V_1, B)= (p-3)b$.
Note that $b\ge 1$. If $a\ge 1$, then by Claim~\ref{clm:A}, $a\ge p-2$ and $n= p-1+a+b\ge 2p-2$.
Since $p\ge 5$, $p-3\ge \frac{p-1}{2}$. Together with (\ref{eq:eg}), we have
\begin{align*}
e(G) &\ge e(G[N_G[v]])+\frac{p+1}{2}a+e(V_1, B) \\
& = \binom{p-1}{2}+\frac{p+1}{2}a+(p-3)b\\
& \ge  \binom{p-1}{2}+\frac{(p+1)a}{2}+\frac{(p-1)b}{2}\\
& = \frac{(p-1)(p-2+a+b)}{2}+a\\
& =  \frac{(p-1)n}{2}+a-\frac{p-1}{2}\\
& > \frac{(p-1)n}{2}\\
& \ge h(n,p).
\end{align*}
Now, suppose that $a=0$. Since $d_{V_1}(y)= p-3$ for each $y\in B$ and $d_G(y)\ge\delta(G)=p-2$, we have $d_{B}(y)\ge 1$ and $e(G[B])\ge \frac{1}{2}(n-p+1) $. Recall that $n=tp+k\,(0\le k\le p-1, t\ge 1 \mbox{ and } n\ge p+1)$.
By Claim~\ref{clm:n lowerbound}, $n\ge p+\frac{5+k^2-pk}{p-4}> p+\frac{3+k^2-pk}{p-4}$.
Thus, \begin{align*}
e(G) &=e(G[N_G[v]])+e(V_1, B)+e(G[B]) \\
& \ge  \binom{p-1}{2}+(p-3)(n-p+1)+\frac{1}{2}(n-p+1)\\
& =\frac{2p-5}{2}n+\frac{-p^2+4p-3}{2}\\
& =\frac{p-1}{2}n+\frac{p-4}{2}n+\frac{-p^2+4p-3}{2}\\
& > \frac{p-1}{2}n+\frac{k(k-p)}{2}\end{align*}\begin{align*}
& =  h(n,p).
\end{align*}

Summarizing the above, if $G[V_1]\cong K_{p-2}$, then $e(G)> h(n, p)$.
\end{proof}
By Claim~\ref{clm:yinB dv1y}, $G[V_1]$ contains a $K_{p-3}$-copy. Together with $d_G(v)=\delta(G)\le p-2$ and Claim~\ref{clm:V1congK p-2}, it suffices to consider the following cases:
\begin{equation}\label{eq:casesremaining}
G[V_1]\cong K_{p-3}, \mbox{ or } d_G(v)=p-2 \mbox{ and } G[V_1]\ncong K_{p-2},
\end{equation}
and we assume this from now on.

\begin{claim}\label{clm:y in B degree}
For any $y\in B$, $d_{V_1\cup B}(y)\ge p-2$.
\end{claim}

\begin{proof}
By Claim~\ref{clm:yinB dv1y}, $d_{V_1}(y)\ge p-3$.
Assume on the contrary that there exists a vertex $y\in B$ such that $d_{V_1\cup B}(y)=d_{V_1}(y)=p-3$.
Since $y\in B$, $G+vy$ has a $K_{p^-}^{+1}$-copy containing $vy$;
moreover, $vy$ is contained in $M_y$, the unique $K_p^-$-copy of this $K_{p^-}^{+1}$-copy.
Then $d_{M_y}(y)\ge p-2$.

Assume that $d_{M_y}(y)= p-1$. We now analyze the degree of $v$ in $M_y$.
If $d_{M_y}(v)= p-1$, then $vy$ is a $(p-1, p-1)$-edge in $M_y$. Thus, $G[V_1\cap N_G(y)]$ contains a $K_{p-2}^-$-copy, which means $d_{V_1}(y)\ge p-2$, a contradiction.
So, $d_{M_y}(v)=p-2$. Let $N_{M_y}(v)=\{y, v_1, v_2,\ldots, v_{p-3}\}$ and $V(M_y)\backslash N_{M_y}(v)=\{z\}$, see Fig.~\ref{fig:y_in_B}.
 \begin{figure}[H]
\begin{center}
\includegraphics*[width=11cm]{y_in_B.png}
\end{center}
\vspace*{-0.5cm}
\caption{The $K_p^-$-copy $M_y$ in $G+vy$.}
\label{fig:y_in_B}
\end{figure}

Since $d_{M_y}(y)= p-1$, $yz\in E(G)$. Recall our assumption that $d_{V_1\cup B}(y)=d_{V_1}(y)=p-3$. Together with $\{v_1, v_2,\ldots, v_{p-3}\}\subseteq N_{V_1}(y)$, this gives $z\in A$. Then by Claim~\ref{clm:type A}, $G[N_G[z]]\cong K_{p}^-$, say $M'$. As $v_1, v_2\in N_G(z)$ and $v_1v_2\in E(G)$, $v_1v_2\in E(M')$. Then either $d_{M'}(v_1)=p-1$ or $d_{M'}(v_2)=p-1$.
Assume that $d_{M'}(v_1)=p-1$. Then $M'$ is a $K_p^-$-copy in $G[N_G[v_1]]$. Since $G$ is $K_p$-free by Observation~\ref{ob:no K_p}, $G[N_G[v_1]]\cong K_p^-$, which contradicts Lemma~\ref{lem:type2.2}.

So, $d_{M_y}(y)=p-2$, and thus, $d_{M_y}(v)=p-1$. Then $G[N_{M_y}(v)]$ contains a $K_{p-1}^-$-copy, which implies that $G[V_1]$ contains a $K_{p-2}$-copy, a contradiction to (\ref{eq:casesremaining}).
\end{proof}

\begin{claim}\label{clm:eG}
$e(G) \ge e(G[N_G[v]])+\frac{p+1}{2}a+\frac{2p-5}{2}b$.
\end{claim}

\begin{proof}
By Claim~\ref{clm:yinB dv1y}, $d_{V_1}(y)\ge p-3$ for any $y\in B$.
We can thus divide $B$ into the following two subsets:
$B_1=\{y\in B: d_{V_1}(y)\ge p-2\}$ and $B_2=\{y\in B: d_{V_1}(y)= p-3\}$.
By Claim~\ref{clm:y in B degree}, $d_{B}(y)\ge 1$ for each vertex $y\in B_2$.
Let $b_1=|B_1|$ and $b_2=|B_2|$.
Then $e(V_1, B)+e(G[B])= e(V_1, B_1)+e(V_1, B_2)+e(G[B_1\cup B_2])\ge (p-2)b_1+(p-3)b_2+\frac{1}{2}b_2\ge \frac{2p-5}{2}b$.
By (\ref{eq:eg}), $e(G) \ge e(G[N_G[v]])+\frac{p+1}{2}a+\frac{2p-5}{2}b.$
\end{proof}
\begin{claim}\label{clm:e(G)}
If $e(G)\ge\frac{2p-5}{2}n+ \frac{-p^2+4p-5}{2}$, then $e(G)\ge h(n, p)$.
\end{claim}

\begin{proof}
Note that $n\equiv k\pmod{p}$ and $h(n, p)=\frac{(p-1)n}{2}+\frac{k(k-p)}{2}$.
By Claim~\ref{clm:n lowerbound}, $n\ge p+\frac{5+k^2-pk}{p-4}$.
Then $(p-4)n \ge p^2-4p+5+k^2-pk$, and thus,
$e(G)\ge \frac{2p-5}{2}n+ \frac{-p^2+4p-5}{2}\ge\frac{p-1}{2}n+\frac{k(k-p)}{2}=h(n, p).$
\end{proof}





\begin{claim}\label{clm:twocases}
If $G[V_1]\cong K_{p-3}$, or $d_G(v)=p-2$ and $e(G[N_G[v]])\ge  \frac{p^2-3p}{2}$, then $e(G)\ge h(n, p)$.
\end{claim}

\begin{proof}
We first assume that $2p-5\ge p+1$ and $A\neq \emptyset$. As $a+b=n-s-1$, we have $\frac{p+1}{2}a+\frac{2p-5}{2}b\ge \frac{p+1}{2}(n-s-1)$.
Since $A\neq \emptyset$, by Claim~\ref{clm:A}, $a\ge p-2$. By Claim~\ref{clm:yinB dv1y}, $s=d_G(v)\ge p-3$. As $B\neq \emptyset$, $n= s+1+a+b\ge 2p-3$.
Together with our assumption and Claim~\ref{clm:e(G)},
\begin{align*}
e(G) &\ge e(G[N_G[v]])+\frac{p+1}{2}a+\frac{2p-5}{2}b\\
&\ge e(G[N_G[v]])+\frac{p+1}{2}(n-s-1)\\
&\ge \min\Big\{\binom{p-2}{2}+\frac{p+1}{2}(n-p+2), \frac{p^2-3p}{2}+\frac{p+1}{2}(n-p+1)\Big\}\\
&\ge  \min\Big\{\frac{p+1}{2}n-2p+4, \frac{p+1}{2}n-\frac{3p-1}{2}\}\\
&\ge  \frac{p-1}{2}n\\
&\ge h(n, p).
\end{align*}
Now, we may assume that $p+1\ge 2p-5$ or $a=0$. Then by Claims~\ref{clm:eG} and~\ref{clm:e(G)},
\begin{equation}\label{eq:egeq}
\begin{aligned}
e(G) &\ge e(G[N_G[v]])+\frac{p+1}{2}a+\frac{2p-5}{2}b\\
&\ge e(G[N_G[v]])+\frac{2p-5}{2}(n-s-1)\\
&\ge \min\Big\{\binom{p-2}{2}+\frac{2p-5}{2}(n-p+2), \frac{p^2-3p}{2}+\frac{2p-5}{2}(n-p+1)\Big\}\\
&=\frac{2p-5}{2}n+ \frac{-p^2+4p-5}{2}\\
&\ge h(n, p).
\end{aligned}
\end{equation}

So, if $G[V_1]\cong K_{p-3}$, or $d_G(v)=p-2$ and $e(G[N_G[v]])\ge  \frac{p^2-3p}{2}$, then $e(G)\ge h(n, p)$.
\end{proof}

By (\ref{eq:casesremaining}) and Claim~\ref{clm:twocases}, we only need to consider the case where $d_G(v)=p-2$ and $e(G[N_G[v]])< \frac{p^2-3p}{2}$, so we make this assumption.
Let $V_1=\{v_1, v_2,\ldots, v_{p-2}\}$ and $V'_1=V_1\backslash\{v_{p-2}\}$.
By Claim~\ref{clm:yinB dv1y}, $G[V_1]$ contains a $K_{p-3}$-copy.
So, we suppose that $G[V'_1]\cong K_{p-3}$.  Since $e(G[N_G[v]])< \frac{p^2-3p}{2}$, we have
\begin{equation}\label{eq:dv p-2}
d_{V_1}(v_{p-2})= e(G[N_G[v]])-d_G(v)-e(G[V'_1])<\tfrac{p^2-3p}{2}-(p-2)-\tfrac{(p-3)(p-4)}{2}=p-4.
\end{equation}

\begin{claim}\label{clm:x in B}
For each $x\in B$, $xv_1, xv_2, \ldots, xv_{p-3}\in E(G)$.
\end{claim}

\begin{proof}
Since $x\in B$, $G[V_1\cap N_G(x)]$ contains a $K_{p-3}$-copy by Lemma~\ref{lem:type2}.
Since $|V_1|=p-2$, if $xv_i\notin E(G)$ for some $i\in [p-3]$,
then $v_{p-2}$ forms a $K_{p-3}$-copy with other $p-4$ vertices in $V_1$.
This implies $d_{V_1}(v_{p-2})\ge p-4$, a contradiction to (\ref{eq:dv p-2}).
\end{proof}

\begin{claim}\label{clm:GX clique-}
If $G[N_G[x]]\cong K_p^-$, then $x\in A$.
\end{claim}

\begin{proof}
By (\ref{eq:dv p-2}), $G[N_G(v)]$ contains no $K_{p-2}$-copy. Then by Lemma~\ref{lem:type2.2}, $G[N_G[v_i]]\ncong K_p^-$ for each $i\in [p-2]$. Thus, $x\notin V_1$, and then $x\in V_2$.
If $x\in B$, since $G[N_G[x]]\cong K_p^-$, $vx$ could be the pendant edge of a $K_{p^-}^{+1}$-copy in $G+vx$. Then $x\in A\cap B$, a contradiction to Claim~\ref{clm:AB disjoint}. So, $x\in A$.
\end{proof}
\begin{claim}\label{clm:dx}
If $xy\in E(\overline{G})$ and $G[N_G(x)\cap N_G(y)]$ contains a $K_{p-2}$-copy, then $d_{A}(x)\ge p-2$ and $d_{A}(y)\ge p-2$.
\end{claim}

\begin{proof}
Assume that $M'$ is a $K_{p-2}$-copy in $G[N_G(x)\cap N_G(y)]$. Then for each $z\in V(M')$, $G[N_G[z]]$ contains a $K_p^-$-copy. By Lemma~\ref{lem:N has K_p}, $G[N_G[z]]\cong K_p^-$. Then by Claim~\ref{clm:GX clique-}, $z\in A$. So, $d_{A}(x)\ge p-2$ and $d_{A}(y)\ge p-2$.
\end{proof}

\begin{claim}\label{clm:xv _p-2}
If $xv_{p-2}\in E(\overline{G})$ for some $x\in B$, then $d_{A}(x)\ge p-2$.
\end{claim}

\begin{proof}
Suppose on the contrary that $d_{A}(x)\le p-3$. Then by Claim~\ref{clm:dx}, $G[N_G(x)\cap N_G(v_{p-2})]$ contains no $K_{p-2}$-copy.

Since $xv_{p-2}\in E(\overline{G})$, $G+xv_{p-2}$ contains a $K_{p^-}^{+1}$-copy, say $M$. Then $xv_{p-2}$ cannot be the pendant edge of $M$.
Otherwise, $G[N_G[x]]$ or $G[N_G[v_{p-2}]]$ would contain a $K_p^-$-copy. By Lemma~\ref{lem:N has K_p}, $G[N_G[x]]\cong K_p^-$ or $G[N_G[v_{p-2}]]\cong K_p^-$, which is a contradiction by Claim~\ref{clm:GX clique-}.

Thus, $xv_{p-2}$ must be an edge in the $K_p^-$-copy of $M$. By Lemma~\ref{lem:type2}, $G[N_G(x)\cap N_G(v_{p-2})]$ contains a $K_{p-3}$-copy, say $M_1$. By (\ref{eq:dv p-2}), $d_{V_1}(v_{p-2})\le p-5$.
So, $v\notin V(M_1)$, and thus, $|V(M_1)\cap V_2|\ge 2$. Assume that $y_1, y_2\in V(M_1)\cap V_2$. If $y_j\in A$ for some $j\in [2]$, then $G[N_G[y_j]]\cong K_p^-$ by Claim~\ref{clm:type A}. As $y_j\in N_G(x)\cap N_G(v_{p-2})$ and $xv_{p-2}\notin E(G)$, $x$ and $v_{p-2}$ are $(p-2)$-degree vertices of this $K_p^-$-copy. Then $G[N_G(x)\cap N_G(v_{p-2})]$ contains a $K_{p-2}$-copy, a contradiction.
So, $y_1, y_2\in B$.

Since $y_1y_2$ is an edge in a $K_{p-3}$-copy in $G[N_G(x)]$, we have $G[\{x, y_1, y_2\}]\cong C_3$. Together with $\{x, y_1, y_2\}\subseteq B$ and Claim~\ref{clm:x in B}, $G[\{x, y_1, y_2, v_1, v_2, \ldots, v_{p-3}\}]\cong K_p$, which contradicts Observation~\ref{ob:no K_p}.
\end{proof}

\begin{claim}\label{clm:degree of V2}
For any $u\in V_2$, $d_G(u)\ge p-1$.
\end{claim}

\begin{proof}
By Claim~\ref{clm:type A}, for each $u\in A$, $d_G(u)=p-1$. So, we assume that $u\in B$. Then $uv$ is an edge in a $K_p^-$-copy $M'$ in $G+uv$. So, $uv$ is either a $(p-1, p-1)$-edge, or a $(p-1, p-2)$-edge of $M'$.
If $d_{M'}(v)=p-1$, then $G[N_{M'}(v)]$ contains a $K^-_{p-1}$-copy. Thus, $G[V_1]$ contains a $K^-_{p-2}$-copy, a contradiction to (\ref{eq:dv p-2}). So, $d_{M'}(v)=p-2$ and $uv$ must be a $(p-1, p-2)$-edge of $M'$.
This means that $G[N_G(u)]$ contains a $K_{p-2}$-copy, which implies $d_G(u)\ge p-2$.

Assume that $d_G(u)= p-2$. Then $G[N_G(u)]\cong K_{p-2}$. By Claim~\ref{clm:x in B}, $uv_i\in E(G)$ for each $v_i\in V'_1$. By (\ref{eq:dv p-2}), $d_{V'_1}(v_{p-2})\le p-5$. Together with $G[N_G(u)]\cong K_{p-2}$, $uv_{p-2}\notin E(G)$. Then by Claim~\ref{clm:xv _p-2}, $d_{A}(u)\ge p-2$. So, $d_G(u)\ge d_{V'_1}(u)+d_{A}(u)\ge (p-3)+(p-2)=2p-5>p-2$, a contradiction.

Thus, each vertex in $V_2$ is of degree at least $p-1$ in $G$.
\end{proof}

\begin{claim}\label{clm:v_iv_{p-2}}
Suppose that $d_{V_1}(v_{p-2})\le p-6$ or $d_{B}(v_{p-2})\le 1$. If $v_iv_{p-2}\in E(\overline{G})$ for some $i\in [p-3]$, $G[N_G(v_i)\cap N_G(v_{p-2})]$ contains a $K_{p-2}$-copy.
\end{claim}

\begin{proof}
Suppose on the contrary that $G[N_G(v_i)\cap N_G(v_{p-2})]$ contains no $K_{p-2}$-copy.
Since $v_iv_{p-2}\in E(\overline{G})$, $G+v_iv_{p-2}$ contains a $K_{p^-}^{+1}$-copy $M$.
By the assumption of this case that $G[V_1]\ncong K_{p-2}$ and Lemma~\ref{lem:type2.2}, $G[N_G[v_j]]\ncong K_p^-$ for each $j\in [p-2]$. Then $v_iv_{p-2}$ cannot be the pendant edge of $M$, and thus, $v_iv_{p-2}$ must be an edge in the $K_p^-$-copy of $M$. By Lemma~\ref{lem:type2}, $G[N_G(v_i)\cap N_G(v_{p-2})]$ contains a $K_{p-3}$-copy, say $M_1$. Since $d_{V'_1}(v_{p-2})\le p-5$ by (\ref{eq:dv p-2}), we have $v\notin V(M_1)$, and thus, $|V(M_1)\cap V_2|\ge 2$.

Take $V(M_1)\cap V_2=\{y_1, y_2,\ldots, y_t\}(2\le t\le p-3)$. If $y_j\in A$ for some $j\in [t]$, by Claim~\ref{clm:type A}, $G[N_G[y_j]]\cong K_p^-$. Since $y_j\in N_G(v_i)\cap N_G(v_{p-2})$ and $v_iv_{p-2}\notin E(G)$, $v_i$ and $v_{p-2}$ are two $(p-2)$-degree vertices of this $K_p^-$-copy. Then $G[N_G(v_i)\cap N_G(v_{p-2})]$ contains a $K_{p-2}$-copy, a contradiction to our assumption.
So, $y_j\in B$ for all $j$. This means $d_{B}(v_{p-2})\ge 2$. By our assumption, $d_{V_1}(v_{p-2})\le p-6$.

So, $|V(M_1)\cap V_2|\ge 3$. Since $y_j\in B$ for all $j$, by Claim~\ref{clm:x in B}, $y_jv_i\in E(G)$ for each $v_i\in V'_1$. Note that $G[\{y_1, y_2, y_3\}]$ is a subgraph of $M_1$. Then $G[\{y_1, y_2, y_3\}]\cong K_3$, and thus, $G[V'_1\cup\{y_1, y_2, y_3\}]$ is a $K_p$-copy in $G$, a contradiction by Observation~\ref{ob:no K_p}.
\end{proof}

\begin{claim}\label{clm:dvp-2}
 If $d_G(v)=p-2$ and $d_{V_1}(v_{p-2})\le p-5$, then $e(G)\ge h(n, p)$.
\end{claim}

\begin{proof}
To prove this claim, we consider two cases as follows.
\begin{flushleft}
1.\quad $d_{V_1}(v_{p-2})= p-5$ and $d_{B}(v_{p-2})\ge 2$.
\end{flushleft}

Then $b\ge 2$. Let $G'=G-A$. Then $d_{G'}(v_{p-2})\ge 1+d_{V_1}(v_{p-2})+d_{B}(v_{p-2})\ge p-2$.
Recall our assumption that $G[V'_1]\cong K_{p-3}$. By Claim~\ref{clm:x in B}, $xv_i\in E(G)$ for each $x\in B$ and $v_i\in V'_1$. Then
\begin{align*}
\small\sum\limits_{i=1}^{p-3}d_{G'}(v_i)&\ge d_{V'_1}(v)+\textstyle\sum_{i=1}^{p-3}d_{V'_1\cup B}(v_i)+d_{V_1}(v_{p-2})\\
&=(p-3)+(p-3)(p-4+b)+(p-5)\\
&=(p-3)b+p^2-5p+4,
\end{align*}
and thus,
\begin{equation}\label{eq:sumV1}
\begin{aligned}
\small\sum\limits_{u\in N_G[v]}d_{G'}(u)&=d_{G'}(v)+\small\sum\limits_{i=1}^{p-3}d_{G'}(v_i)+d_{G'}(v_{p-2})\\
&\ge  (p-2)+\big((p-3)b+p^2-5p+4\big)+(p-2)\\
&\ge (p-3)b+p^2-3p.
\end{aligned}
\end{equation}

Assume that $A=\emptyset$. Then $G'=G$. Since $b\ge 2$, by (\ref{eq:sumV1}) and Claim~\ref{clm:degree of V2},
$$
2e(G)=\small\sum\limits_{u\in N_G[v]}d_{G}(u)+\small\sum\limits_{u\in V_2}d_G(u)
\ge   (p-3)b+p^2-3p+(p-1)(n-p+1)
=(p-1)n+(p-3)b-p-1.
$$
If $b\ge 3$, since $p\ge 5$, $e(G)\ge\frac{(p-1)n}{2}\ge h(n,p)$. If $b=2$, then $n=p+1$ and $e(G)\ge \frac{p^2+p-8}{2}> \frac{p^2-p}{2}=h(p+1,p)$.

Now, assume that $A\neq\emptyset$. By Claim~\ref{clm:A}, $a\ge p-2$, and there exist two vertices in $\overline{A}$ each of which has $p-2$ neighbours in $A$, say $u_1$ and $u_2$.
Then $d_A(u_1)\ge p-2$ and $d_A(u_2)\ge p-2$. In addition, $\{u_1, u_2\}\subseteq V_1\cup B$.
If $u_i\in V_1$ for some $i\in[2]$, then by (\ref{eq:sumV1}), $$\small\sum\limits_{u\in N_G[v]}d_{G}(u)\ge \sum\limits_{u\in N_G[v]}d_{G'}(u) +d_A(u_i)\ge (p-3)b+p^2-3p+(p-2)=(p-3)b+p^2-2p-2.$$ Since $b\ge 2$, by Claim~\ref{clm:degree of V2},
\begin{align*}
e(G)&=\frac{1}{2}\big(\sum\limits_{u\in N_G[v]}d_{G}(u)+\sum\limits_{u\in V_2}d_{G}(u)\big)\\
&\ge \frac{1}{2}\big((p-3)b+p^2-2p-2+(p-1)(n-p+1)\big)\\
&\ge\frac{1}{2}\big((p-1)n+2p-9\big)\\
&>\frac{(p-1)n}{2}\\
&\ge h(n,p).
\end{align*}
Now, assume that $\{u_1, u_2\}\subseteq B$. By Claim~\ref{clm:x in B}, $u_iv_j\in E(G)$ for each $i\in [2]$ and $v_j\in V'_1$. Then $d_G(u_i)\ge d_A(u_i)+d_{V'_1}(u_i)\ge p-2+p-3=2p-5$.
By Claim~\ref{clm:degree of V2}, $\sum\limits_{u\in V_2}d_{G}(u)=d_G(u_1)+d_G(u_2)+\sum\limits_{u\in V_2\backslash\{u_1, u_2\}}d_{G}(u)\ge 2(2p-5)+(p-1)(n-p-1)=(p-1)n-p^2+4p-9$.
By (\ref{eq:sumV1}) and $b\ge 2$,
\begin{align*}
\sum\limits_{u\in N_G[v]}d_{G}(u)+\sum\limits_{u\in V_2}d_{G}(u)
&\ge \Big((p-3)b+p^2-3p\Big)+\Big((p-1)n-p^2+4p-9\Big)\\
&\ge(p-1)n+3p-15\\
&\ge (p-1)n.
\end{align*}
Then $e(G)=\frac{1}{2}\big(\sum\limits_{u\in N_G[v]}d_{G}(u)+\sum\limits_{u\in V_2}d_{G}(u)\big)\ge \frac{(p-1)n}{2}\ge h(n, p)$.

Thus, if $d_{V_1}(v_{p-2})= p-5$ and $d_{B}(v_{p-2})\ge 2$, then $e(G)\ge h(n, p)$.
\begin{flushleft}
2.\quad $d_{V_1}(v_{p-2})\le p-6$ or $d_{B}(v_{p-2})\le 1$
\end{flushleft}

 By Claim~\ref{clm:x in B}, $xv_i\in E(G)$ for each $x\in B$ and $v_i\in V'_1$.
By Claims~\ref{clm:dx} and~\ref{clm:v_iv_{p-2}}, for each $v_iv_{p-2}\in E(\overline{G})$, $d_{A}(v_i)\ge p-2$. Since $G[V'_1]\cong K_{p-3}$,
 $$d_G(v_i)=1+d_{V_1}(v_i)+d_{A}(v_i)+d_{B}(v_i)\ge 1+(p-4)+(p-2)+b=2p-5+b.$$
For $v_jv_{p-2}\in E(G)$, $$d_G(v_j)\ge 1+d_{V_1}(v_j)+d_{B}(v_j)\ge 1+(p-3)+b=p-2+b.$$
Since $d_G(v_{p-2})\le p-5$, we can assume that $v_1v_{p-2}, v_2v_{p-2}\notin E(G)$.
Recall that $\mathcal{M}=\{M|M \mbox{ is a } K_{p}^- \mbox{-copy in } G\}$.
By Claim~\ref{clm:v_iv_{p-2}}, $\{v_1, v_{p-2}\}\subseteq V(M_1)$ and $\{v_2, v_{p-2}\}\subseteq V(M_2)$ for some $M_1, M_2\in \mathcal{M}$. By Lemma~\ref{lem:x in two K}, the $(p-1)$-degree vertices of any two distinct $K_p^-$-copies are distinct. Then $d_{M_1}(v_{p-2})+d_{M_2}(v_{p-2})=2p-4$. Also, by Claim~\ref{clm:GX clique-}, each $(p-1)$-degree vertex of $M\in \mathcal{M}$ belongs to $A$. Then $d_A(v_{p-2})\ge d_{M_1}(v_{p-2})+d_{M_2}(v_{p-2})=2p-4$. By our assumption that $b\ge 1$, we have
\begin{align*}
\sum\limits_{u\in N_G[v]}d_G(u) &=d_G(v)+\sum\limits_{u\in V'_1}d_G(u)+d_G(v_{p-2})\\
&\ge (p-2)+\Big((p-5)(p-2+b)+2(2p-5+b)\Big)+(2p-3)\\
&= p^2+(p-3)b-5\\
&\ge p^2+p-8\\
&> p^2-2p+1.
\end{align*}
Along with Claim~\ref{clm:degree of V2}, we have
$$e(G)= \frac{1}{2}\big(\sum\limits_{u\in N_G[v]}d_G(u)+\sum\limits_{u\in V_2}d_G(u)\big)
> \frac{1}{2}\big(p^2-2p+1+(n-p+1)(p-1)\big)=\frac{(p-1)n}{2}\ge h(n, p),$$ as desired.
\end{proof}
By Claim~\ref{clm:yinB dv1y}, $G[V_1]$ contains a $K_{p-3}$-copy.
Together with Claim~\ref{clm:V1congK p-2}, Claim~\ref{clm:twocases}, and Claim~\ref{clm:dvp-2}, our argument holds in Case 2.

This completes the proof.
\end{proof}
Now we are in a position to prove Theorem~\ref{thm:Kp*}.
\begin{proof}[\textbf{Proof of Theorem~\ref{thm:Kp*}}]
Since $n\equiv k\pmod{p}$, the graph $\frac{n-k}{p}K_p\cup K_k$ is a $K_{p^-}^{+1}$-saturated graph of order $n$ with $h(n, p)$ edges. So, $\sat(n, K_{p^-}^{+1})\le e(\frac{n-k}{p}K_p\cup K_k)=h(n, p)$.
Now we prove the lower bound.

Let $G$ be a minimum $K_{p^-}^{+1}$-saturated graph of order $n$. If $G$ is connected, then by Lemma~\ref{lem:connected}, $e(G)\ge h(n,p)$. Now suppose that $G$ is disconnected, and $G_{1}, G_{2},\ldots, G_{s}\,(s\geq 2)$ are the components of $G$.

By Lemma~\ref{kpsubgraph}, we may assume that $G_i\cong K_p$ for each $i\in [s-1]$.
For the remaining component $G_s$ of order $n_s$, if $n_s\le p$, since $n\equiv k\pmod{p}$, we have $n_s=k$. Then $G_s \cong K_k$, for otherwise, adding edges in $G_s$ produces no copy of $K_{p^-}^{+1}$, a contradiction. So, $e(G)=\frac{(p-1)(n-k)}{2}+\binom{k}{2}=h(n, p)$.
If $n_s\ge p+1$, then $G_s$ is a connected $K_{p^-}^{+1}$-saturated graph. Since $G_i\cong K_p$ for each $i\in [s-1]$, we have $n_s\equiv k\pmod{p}$. By Lemma~\ref{lem:connected}, $e(G)\ge\frac{(p-1)(n-n_s)}{2}+h(n_s,p)= \frac{(p-1)(n-n_s)}{2}+\frac{(p-1)(n_s-k)}{2}+\binom{k}{2}=h(n, p)$.

Summarizing the above, if $p\ge 5$, we have $e(G)\ge h(n,p)$.
Thus, $\sat(n, K_{p^-}^{+1})=h(n, p)$.
\end{proof}

\section{Concluding remark}

This work builds on the inequality proved by Cameron and Puleo \cite{Ca}, which states that $\sat(n,K_1 \vee F)\le n-1+\sat(n-1, F)$ for all $n \ge |V(F)|+1$. This bound immediately raises the question of when equality holds, and much of the subsequent work has focused on a non-empty graph $F$ with no isolated vertex.
In this paper, we investigate the case in which $F$ is a non-empty graph with isolated vertices.
We determine the exact saturation numbers for $K_1\vee F$ when $F=K^-_{3}\cup sK_1(s\ge 1)$ and $F=K^-_{p-1}\cup K_1(p\ge 5)$ and verify whether
\begin{equation}\label{eq:ca}
\sat(n,K_1 \vee F)= n-1+\sat(n-1, F)
\end{equation}
holds for these cases.

In fact, both graphs $F$ we investigate here are the union of a nearly complete graph $K_{p-1}^-$ and some isolated vertices.
Interestingly, despite the very similar structures of these two families of graphs, we obtain two completely opposite conclusions. For $F=K^-_{3}\cup sK_1(s\ge 1)$, it can be easily checked that $\sat(n-1, K_{3}^-\cup sK_1)=\lfloor\frac{n-1}{2}\rfloor$ for any $n\ge s+4$. Together with Theorem~\ref{thm:K4*}, $$\sat(n, K_1\vee (K_{3}^-\cup sK_1))=\Big\lceil\frac{3n-4}{2}\Big\rceil=n-1+\Big\lfloor\frac{n-1}{2}\Big\rfloor=n-1+\sat(n-1, K_{3}^-\cup sK_1).$$
Moreover, as shown in our proof, there exists an extremal graph of $K_1\vee (K_{3}^-\cup sK_1)$ that contains a full-degree vertex, just like the construction in Eq.~(\ref{eq:ca}).

For $F=K^-_{p-1}\cup K_1(p\ge 5)$, however, the equality (\ref{eq:ca}) does not hold.
From the saturation number of generalized books given in~\cite{GChen2009}, we immediately obtain that if $n\ge 4(2p-4)^{p-3}$, then $\sat(n-1, K_{p-1}^-\cup K_1)=\frac{1}{2}\big((2p-7)n-p^2+4p-2+\theta\big)$, where $\theta=0$ if $n-p\equiv 0 \pmod{2}$, and $\theta=1$ otherwise. It can be checked that $\sat(n, K_1\vee (K_{p-1}^-\cup K_1))\neq n-1+\sat(n-1, K_{p-1}^-\cup K_1)$. Moreover, we claim that any extremal graph for $K_1\vee (K_{p-1}^-\cup K_1)$ contains no full-degree vertex; this is proved in the following theorem.

\begin{theorem}
Let $n\ge p+1$.
If $p\ge 5$, then any extremal graph of $K_1\vee (K_{p-1}^-\cup K_1)$ contains no full-degree vertex.
\end{theorem}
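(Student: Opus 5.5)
We argue by contradiction. Let $G$ be an extremal graph for $K_{p^-}^{+1}=K_1\vee(K_{p-1}^-\cup K_1)$ of order $n\ge p+1$, so that by Theorem~\ref{thm:Kp*} we have $e(G)=h(n,p)$. Suppose $w\in V(G)$ has $d_G(w)=n-1$. Then $G$ is connected, and by Observation~\ref{ob:no K_p} it is $K_p$-free. The plan is to show that $G'=G-w$ is $K_{p-1}^-$-saturated with minimum degree at least $p-3$. The resulting bound $e(G)=n-1+e(G')$ will then exceed $h(n,p)$.

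\emph{Step 1: $G$ contains no $K_p^-$.} Let $M$ be a $K_p^-$-copy in $G$.
\begin{itemize}
\item If $w\notin V(M)$, then $w$ is adjacent to every vertex of $M$. Taking a $(p-1)$-degree vertex $x$ of $M$, the edge $xw$ is a pendant edge attached to $M$, so $G$ contains $K_{p^-}^{+1}$, a contradiction.
\item If $w\in V(M)$, then, since $w$ is adjacent to every vertex of $G$, the only non-edge of $M$ cannot be at $w$. Hence $d_M(w)=p-1$ and $M\subseteq G[N_G[w]]$. Lemma~\ref{lem:N has K_p} then forces $d_G(w)=p-1<n-1$, a contradiction.
\end{itemize}
Consequently, $G'$ contains no $K_{p-1}^-$, since such a copy together with $w$ would form a $K_p^-$.

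\emph{Step 2: $G'$ is $K_{p-1}^-$-saturated.} Let $xy$ be a non-edge with $x,y\ne w$. The graph $G+xy$ contains a $K_{p^-}^{+1}$-copy using $xy$. If $xy$ were its pendant edge, then $G$ itself would contain a $K_p^-$, contradicting Step 1. So $xy$ lies in a $K_p^-$-copy $M$ of $G+xy$, and $M-w$ (or $M$ minus any vertex, if $w\notin V(M)$) contains a $K_{p-1}^-$ through $xy$. Hence $G'+xy\supseteq K_{p-1}^-$ for every non-edge $xy$ of $G'$.

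By Lemma~\ref{lem:type2} applied to $M$, every non-adjacent pair $x,y$ of $G'$ has a $K_{p-3}$-copy in $N_{G'}(x)\cap N_{G'}(y)$ (after discarding $w$). Note that $G'$ is not complete, because $G$ is $K_p$-free and $n-1\ge p$. I then aim to show $\delta(G')\ge p-3$:
\begin{itemize}
\item a vertex with some non-neighbour in $G'$ has degree at least $p-3$ by the common-neighbourhood fact above;
\item a vertex adjacent to all of $G'$ has degree $n-2\ge p-1$.
\end{itemize}
Thus $e(G)\ge n-1+\frac{(p-3)(n-1)}{2}$.

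\emph{Step 3: comparison with $h(n,p)$.} A direct computation gives
\[h(n,p)-(n-1)=\frac{(p-3)n}{2}+1+\frac{k(k-p)}{2}.\]
For $1\le k\le p-1$ we have $k(p-k)\ge p-1$, so this quantity is at most $\frac{(p-3)(n-1)}{2}$. Then $e(G)\ge h(n,p)$ holds, but only with possible equality. I therefore need strictness, or else an equality analysis showing that $G'$ would have to be $(p-3)$-regular with every neighbourhood containing $K_{p-3}$. That would mean every closed neighbourhood is a $K_{p-2}$, so $G'$ is a disjoint union of $K_{p-2}$'s. In $G$ these become $K_{p-1}$'s all joined to $w$, and adding an edge between two of them creates no $K_p^-$ and no suitable pendant structure (the cliques $K_{p-1}$ through $w$ have no $(p-1)$-vertex $K_p^-$). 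This contradicts saturation.

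The main obstacle is the case $k=0$, where I need $e(G')>\frac{(p-3)n}{2}+1$, i.e.\ about $\frac{p-1}{2}$ edges beyond the minimum-degree bound. Here I would first try the same structural argument: vertices of $G'$ of degree exactly $p-3$ have clique neighbourhoods, so they lie in $K_{p-2}$-blocks. Each such block must have vertices with additional neighbours; otherwise the block is a component and saturation fails as above. Charging these extra edges to blocks should produce the surplus of order $p$. If this charging turns out too delicate, the fallback is to use the $K_{p-3}$-in-common-neighbourhood property more sharply. A vertex whose degree is exactly $p-3$ but whose neighbourhood is not a clique cannot exist, so the low-degree vertices are rigidly structured, and the count reduces to a bound on the number of $K_{p-2}$-blocks.
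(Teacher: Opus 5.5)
Your Step~1 and the saturation half of Step~2 are correct. Step~1 is a nice strengthening of what the paper extracts as $A=\emptyset$: a universal vertex $w$ makes $G$ free of $K_p^-$, so $G'=G-w$ is $K_{p-1}^-$-saturated.

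\textbf{The minimum-degree claim is false.} Lemma~\ref{lem:type2}, applied to the $K_p^-$-copy through $xy$ in $G+xy$, gives a $K_{p-3}$ in $G[N_G(x)\cap N_G(y)]$. But $w$ is a common neighbour of every pair and may lie in that clique. After deleting $w$ you are only guaranteed a $K_{p-4}$ in $N_{G'}(x)\cap N_{G'}(y)$, which is what Lemma~\ref{lem:type2} gives for $G'+xy$ with $p-1$ in place of $p$. So only $\delta(G')\ge p-4$ follows, and this is sharp among saturated graphs with a universal vertex. Take $K_{p-3}\vee(K_1\cup mK_2)$ with $m\ge 2$, of order $p-2+2m\ge p+1$. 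It is $K_{p^-}^{+1}$-saturated: it contains no $K_p^-$, and adding any missing edge creates a $K_p^-$ on the $K_{p-3}$ plus three vertices, with a pendant edge available at any vertex of the $K_{p-3}$. After deleting one vertex of the $K_{p-3}$, the vertex of the $K_1$ has degree $p-4$ even though it has non-neighbours. With only $\delta(G')\ge p-4$, your count gives $e(G)\ge\frac{(p-2)(n-1)}{2}$. This falls short of $h(n,p)$, for example whenever $k\in\{0,1,p-1\}$. Your equality analysis for $k\in\{1,p-1\}$, which uses that every neighbourhood contains a $K_{p-3}$, rests on the same false claim.

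\textbf{The case $k=0$ is unproved even granting the claim.} Your claimed bound $e(G)\ge\frac{(p-1)(n-1)}{2}$ is $\frac{p-1}{2}$ short of $h(n,p)$ when $k=0$, and there you only sketch a charging idea. A minimum-degree count is intrinsically too weak here. To keep your reduction you would need $e(G')>\frac{(p-3)n}{2}+1-\frac{k(p-k)}{2}$ for every $K_{p-1}^-$-saturated graph on $n-1\ge p$ vertices. That is a saturation theorem for $K_{p-1}^-$ in its own right, valid down to small orders.

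\textbf{How the paper proceeds.} It works with a minimum-degree vertex $v$. Once $A=\emptyset$, which your Step~1 gives immediately, the case $G[V_1]\cong K_{p-2}$ already gives strict inequality. In the remaining cases the vertices outside $N_G[v]$ contribute on average at least $\frac{2p-5}{2}>\frac{p-1}{2}$ edges (Claim~\ref{clm:eG}). The equality analysis through the cases of Lemma~\ref{lem:connected} then forces $p=5$ and either $G[N_G[v]]\cong K_4^-$ or $d_{V_1}(v_{p-2})=0$. Each of these contradicts the existence of the universal vertex.
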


\begin{proof}
For convenience, we proceed directly using the same notation, definitions, constructions, and intermediate results established in the proof of Lemma~\ref{lem:connected}.

Assume that $G$ is an extremal graph of $K_{p^-}^{+1}$ and $G$ contains a full-degree vertex $u$.
Recall that $v$ is a vertex of minimum degree in $G$ with $V_1=N_G(v)$ and $V_2=V(G)\backslash (V_1\cup \{v\})$.
Clearly, $G$ is connected and $u\in V_1$.
Since $e(G)=h(n, p)$, we have $\delta(G)\le p-2$, for otherwise, $$e(G)= \frac{1}{2}\sum\limits_{x\in V(G)}d_G(x)\ge \frac{1}{2}\big(n-1+(p-1)(n-1)\big)=\frac{pn-p}{2}>\frac{(p-1)n}{2}\ge h(n, p),$$ a contradiction.


Take $w\in V_2$. Then $G+vw$ contains a $K_{p^-}^{+1}$-copy $M_w$ with $vw\in E(M_w)$.
Recall that $V_2$ is partitioned into two disjoint subsets $A$ and $B$,
where, for $w \in A$, $vw$ is the pendant edge in $M_w$, and for $w \in B$, $vw$ is an edge in the $K_p^-$-copy of $M_w$.
Also, $A\cap B=\emptyset$ by Claim~\ref{clm:AB disjoint}.
Let $a=|A|$ and $b=|B|$. Then $a+b=n-1-d_G(v)$.

If $A\neq\emptyset$, then we can assume that $w\in A$. By Claim~\ref{clm:type A}, $G[N_G[w]]\cong K_p^-$, say $M'$. Since $d_G(u)=n-1$, $uw\in E(M')$. If $d_{M'}(u)=p-1$, then $M'$ is a $K_p^-$-copy in $G[N_G[u]]$.
By Lemma~\ref{lem:N has K_p}, $G[N_G[u]]\cong K_p^-$ and $d_G(u)=p-1$. This contradicts the fact that $d_G(u)=n-1$.
So, $d_{M'}(u)=p-2$.
Let $u'$ be a vertex in $V(M')\backslash\{u\}$ with $d_{M'}(u')=p-2$.
Since $G$ is $K_p$-free, $uu'\notin E(G)$. This contradicts $d_G(u)=n-1$.

So, $A=\emptyset$, which implies $b= n-1-d_G(v)\ge n-1-(p-2)\ge 2$. By Claim~\ref{clm:yinB dv1y}, $G[V_1]$ contains a $K_{p-3}$-copy.
From the proof of Case 2 in Lemma~\ref{lem:connected},
we consider the following cases.
\begin{enumerate}
  \item $G[V_1]\cong K_{p-2}$. By Claim~\ref{clm:V1congK p-2}, we have $e(G)>h(n, p)$, a contradiction.
  \item $G[V_1]\cong K_{p-3}$, or $\delta(G)=p-2$ and $\frac{p^2-3p}{2}\le  e(G[N_G[v]])<\frac{p^2-3p+2}{2}$.

  Recall that $A=\emptyset$. By our proof of Claim~\ref{clm:twocases}, if $e(G)=h(n, p)$, then
  all intermediate steps in (\ref{eq:egeq}) hold with exact equality.
  Thus, $e(G) = e(G[N_G[v]])+\frac{p+1}{2}a+\frac{2p-5}{2}b$, $\delta(G)=p-2$, $e(G[N_G[v]])=\frac{p^2-3p}{2}$ and $$e(G)=\frac{2p-5}{2}n+ \frac{-p^2+4p-5}{2}=\frac{(p-1)n}{2}+\frac{k(k-p)}{2}=h(n, p).$$ Thus, $n= p+\frac{5+k^2-pk}{p-4}$. Then by the proof of Claim~\ref{clm:n lowerbound}, we have $p=5$.

  Since $d_G(v)=\delta(G)=p-2$ and $e(G[N_G[v]])=\frac{p^2-3p}{2}$, we have $G[N_G[v]]\cong K_{p-1}^-$.
  Together with $p=5$, we have $G[N_G[v]]\cong K_4^-$.
  Let $V_1 \backslash\{u\}=\{v_1, v_2 \}$. As $d_G(u)=n-1$, $v_1v_2$ must be the one edge not in $G[N_G[v]]$. Then $G+v_1v_2$ has a $K_{p^-}^{+1}$-copy containing $v_1v_2$, say $M$.
  Since $G[V_1]\ncong K_{p-2}$, by Lemma~\ref{lem:type2.2}, $G[N_G[v_i]]\ncong K_p^-$ for each $i\in [2]$. Thus, $v_1v_2$ must be an edge in the $K_p^-$-copy of $M$.

  Recall we have shown that $e(G)=e(G[N_G[v]])+\frac{p+1}{2}a+\frac{2p-5}{2}b$ and $A=\emptyset$.
  By our proof of Claim~\ref{clm:eG}, one can deduce that $d_G(y)=p-2$ and $d_{V_1}(y)=p-3$ for any $y\in B$.
  Thus, $u, v_1$, and $v_2$ are all vertices in $G+v_1v_2$ of degree at least $p-1$. So, all three vertices are in $M$ and have degree at least $p-1$ in $M$. Furthermore, by the construction of a $K_p^-$-copy, two $(p-2)$-degree vertices in $M$ must be adjacent to $u, v_1$ and $v_2$. Since $V(G)\backslash \{u, v_1, v_2\}=B\cup \{v\}$, there exists a vertex in $B$, say $y'$, such that $d_{V_1}(y')=3=p-2$. This contradicts the property $d_{V_1}(y')=p-3$ proved above.

  \item $d_G(v)=p-2$ and $e(G[N_G[v]])<\frac{p^2-3p}{2}$. Recall our assumption that $G[\{v_1, \ldots, v_{p-3}\}]\cong K_{p-3}$. Then $d_{V_1}(v_{p-2})\le p-5$.
      Since $e(G)=h(n, p)$ and $A=\emptyset$,  by our proof in Claim~\ref{clm:dvp-2}, we have $d_{V_1}(v_{p-2})= p-5$ and $p=5$.
   This implies that $v_{p-2}$ has no neighbour in $V_1$. This contradicts the fact that $u\in V_1$ and $d_G(u)=n-1$.
\end{enumerate}

Summarizing the above, any extremal graph of $K_1\vee (K_{p-1}^-\cup K_1)$ contains no full-degree vertex.
\end{proof}

From the above discussion, we reasonably conjecture that the presence of a full-degree vertex in some extremal graph of $K_1\vee F$ is a necessary condition for Eq.~(\ref{eq:ca}) to hold. Our precise conjecture is as follows.
\begin{conjecture}
Let $F$ be a non-empty graph and $n \ge |V(F)|+1$.
Then $\sat(n, K_1 \vee F)=n-1+\sat(n-1, F)$ holds only if there exists an extremal graph of $K_1 \vee F$ containing a full-degree vertex.
\end{conjecture}
Furthermore, it would also be interesting to investigate the conditions under which an extremal graph has a full-degree vertex.
While this problem is perhaps too general to be meaningful for arbitrary graphs, the situation becomes entirely different when considered for joins of graphs.

Turning to the results in our paper, it is natural to pose the following problem.
\begin{problem}
Determine the exact value of $\sat(n, K_{p^-}^{+s})$ for all $p\ge 5$ and $s\ge 2$.
\end{problem}

The \emph{house graph} is a five-vertex graph formed by adding a chord to a 5-cycle.
In this paper, we determine the saturation number for $K_{4^-}^{+1}$. Together with previous results, this leaves the house graph as the only connected graph on 5 vertices whose saturation number is still unknown.
We pose the following problem.

\begin{problem}
Determine the saturation number for the house graph.
\end{problem}

\section*{Acknowledgments}
This research is supported in part by National Natural Science Foundation of China (No. 12571363) and   Natural Science Foundation of Hunan Province (Grant No. 2025JJ30003).


\end{document}